\theoremstyle{plain}
\newtheorem{theorem}{Theorem}[section]
\newtheorem{definition}[theorem]{Definition}
\newtheorem{example}[theorem]{Example}
 \def\ivq{\in \! \vee \, {\rm q}}
\begin{document}
\title{\bf Soft  MTL-algebras  based on fuzzy sets}
 \normalsize

\author{  {Jianming Zhan}$^{a,*}$, {Wies{\l}aw A. Dudek}$^b$ \\ {\small
   $^{a }$ Department of Mathematics,
  Hubei Institute for Nationalities,}\\ {\small Enshi, Hubei Province,
   445000, P. R. China}\\
   {\small $^b$ Institute of Mathematics and Computer Science,
  Wroc{\l}aw University of Technology,}\\
  {\small Wybrze\.ze Wyspia\'nskiego 27, 50-370 Wroc{\l}aw, Poland}  \\}

\date{}\maketitle

\begin{flushleft}\rule[0.4cm]{12cm}{0.3pt}
\parbox[b]{12cm}{\small
\bf Abstract\rm \paragraph{ } In this paper, we deal with soft
MTL-algebras based on fuzzy sets. By means of $\in$-soft sets and
q-soft sets, some characterizations of (Boolean, G- and MV-)
filteristic soft MTL-algebras are investigated. Finally, we prove that a soft set is a
Boolean filteristic  soft MTL-algebra if and only if it is both a G-filteristic  soft
MTL-algebra and an MV-filteristic soft MTL-algebra.\\ \\

{\it Keywords:} Soft MTL-algebra; $(\in,\in\vee q)$-fuzzy (Boolean, G- and MV-) filter;
$(\overline{\in},\overline{\in}\vee \overline{q})$-fuzzy
(Boolean, G- and MV-) filter; fuzzy (Boolean, G- and MV-) filter with thresholds; (Boolean, G- and MV-) filteristic  soft MTL-algebra.  \\

{\it 2000 Mathematics Subject Classification:} 03E72; 03G10; 08A72. }
\rule{12cm}{0.3pt}
\end{flushleft}
\footnote{* Corresponding author. Tel/Fax: 0086-718-8437732.
       \\ \it E-mail addresses:\rm \ \
zhanjianming@hotmail.com (J. Zhan),
dudek@im.pwr.wroc.pl (W.A. Dudek).}

\section{Introduction }

\paragraph{ }  To solve complicated problems in economics,
engineering, and environment, we cannot successfully use classical
methods because of various uncertainties typical for those problems.
There are three theories: theory of probability, theory of fuzzy
sets, and the interval mathematics which we can consider as
mathematical tools for dealing with uncertainties. But all these
theories have their own difficulties. Uncertainties cannot be
handled using traditional mathematical tools but may be dealt with
using a wide range of existing theories such as probability theory,
theory of (intuitionistic) fuzzy sets, theory of vague sets, theory
of interval mathematics, and theory of rough sets. However, all of
these theories have their own difficulties which have been  pointed
out in \cite{14}. Maji et al. \cite{12} and Molodtsov \cite{14} suggested that one
reason for these difficulties may be due to the inadequacy of the
parametrization tool of the theory. To overcome these difficulties,
Molodtsov \cite{14} introduced the concept of soft set as a new
mathematical tool for dealing with uncertainties that is free from
the difficulties that have troubled the usual theoretical
approaches. Molodtsov pointed out several directions for the
applications of soft sets. At present, research on the soft set
theory is progressing rapidly. Maji et al. \cite{13} described the
application of soft set theory to a decision making problem. They
also studied several operations on the theory of soft sets. The
algebraic structure of set theories dealing with uncertainties has
been studied by some authors. The most appropriate theory for
dealing with uncertainties is the theory of fuzzy sets developed by
Zadeh \cite{17, 18}. Jun \cite{5} applied the notion of soft sets by Molodtsov to the
theory of BCK/BCI-algebras, and introduced the notions of soft
BCK/BCI-algebras, and then investigated their basic properties
\cite{6}. Aktas et al. \cite{1} studied the basic concepts of soft set theory,
and compared soft sets to fuzzy and rough sets, providing some
examples to clarify their differences.

The interest in foundation of Fuzzy Logic has been rapidly
recently and several new algebras playing the role of the
structures of truth values has been introduced.
H$\mathrm{\acute{a}}$jek introduced the axiom system of basic
logic (BL) for fuzzy propositional logic and defined the class of
BL-algebras (see \cite{4}). The logic MTL, Monoidal t-norm based
logic was introduced by Esteva and Godo \cite{3}. This logic is
very interesting from many points of view. From the logic point of
view, it can be regarded as a weak system of Fuzzy Logic. In
connection with the logic MTL, Esteva and Godo \cite{3} introduced
a new algebra, called a MTL-algebra, and studied several basic
properties. In the same times independently were introduced in
\cite{21} weak-BL algebras as commutative weak-pseudo-BL algebras.
MTL-algebras and weak-BL algebras are the same algebras.

Based on the fuzzy set theory, Kim et al. in \cite{8} studied
the fuzzy structure of filters in MTL-algebras. As a continuation of
the paper \cite{8}, Jun et al. \cite{7} gave characterizations of fuzzy
filters in MTL-algebras and investigated further properties of fuzzy
filters in MTL-algebras. The other important results can be found in
\cite{16, 19}.

The idea of quasi-coincidence of a fuzzy point with a fuzzy set,
which was mentioned in \cite{15}, played a vital role to generate some
different types of fuzzy subsets. It is worth pointing out that
Bhakat and Das \cite{2} initiated the concepts of $(\alpha,\beta)$-fuzzy
subgroups by using the ``belongs to'' relation $(\in \, )$ and
``quasi-coincident with'' relation $({\rm q})$ between a fuzzy point
and a fuzzy subgroup, and introduced the concept of an $(\in,
\ivq)$-fuzzy subgroup. In fact, the $(\in,\ivq)$-fuzzy subgroup
is an important generalization of Rosenfeld's fuzzy subgroup. It is
now natural to investigate similar type of generalizations of the
existing fuzzy subsystems of other algebraic structures. With this
objective in view, Ma et al. \cite{9, 10, 11} discussed  some kind of generalized fuzzy
filters of MTL-algebras.

In this paper, we deal with soft MTL-algebras based on fuzzy sets.
In Section 2, we recall some basic defnitions of MTL-algebras. In
Section 3, we discuss the characterizations of filteristic soft
MTL-algebras. In Section 4, we divide into three parts. In
Subsection 4.1, we investigate some characterizations of Boolean
filteristic soft MTL-algebras. Some properties of MV- and G-filteristic
soft MTL-algebras are invetigated in Subsection 4.2 and 4.3,
respectively. Finally, we prove that a soft set is a Boolean filteristic soft
MTL-algebra if nd only if it is both a G-filteristic soft MTL-algebra and an
MV-filteristic soft MTL-algebra.

\section{Preliminaries }

\paragraph{ }By a {\it commutative, integral and bounded residuated lattice}
we shall mean a lattice $L=(L,\le,\wedge,\vee, \odot,\rightarrow,
0,1)$ containing the least element $0$ and the largest element $1\ne 0$, and
endowed with two binary operation $\odot$ (called product) and
$\rightarrow$ (called residuum) such that

$(1)$ \  $\odot$ is associative, commutative  and isotone,

$(2)$ \  $\forall x\in L$, $x\odot 1=x$,

$(3)$ \  the Galois correspondence holds, that is,

\hspace*{10mm}$\forall x,y,z\in L$,  $x\odot y\le z\Leftrightarrow x\le y\rightarrow
z.$

\medskip
In a commutative, integral and bounded  residuated lattice, the
following are true  (see \cite{16}):

$(1)$ \  $x\le y\Leftrightarrow x\rightarrow y=1$,

$(2)$ \ $ 0\rightarrow x=1, \ 1\rightarrow x=x, \ x\rightarrow(y\rightarrow
x)=1$,

$(3)$ \  $y\le (y\rightarrow x)\rightarrow x$,

$(4)$ \  $x\rightarrow (y\rightarrow z)=(x\odot y)\rightarrow
z=y\rightarrow (x\rightarrow z)$,

$(5)$ \   $x\rightarrow y\le (z\rightarrow x)\rightarrow (z\rightarrow
y), \ \   x\rightarrow y \le (y\rightarrow z)\rightarrow (x\rightarrow
z)$.

\paragraph{ }Based on  the H$\mathrm{\acute{a}}$jek's results \cite{4},  Axioms of MTL  and
Formulas which are provable in MTL,  Esteva  and  Godo  \cite{3} defined
the algebras,  so called MTL-algebras  corresponding to the
MTL-logic in the following way:

\paragraph{ } A   {\it MTL-algebra } is a commutative, integral and bounded  residuated lattice
$L=(L,\le,\wedge,\vee ,\odot,\rightarrow, 1)$   satisfying  the
pre-linearity   equation:

$$(x\rightarrow y)\vee (y\rightarrow  x)=1.$$

\paragraph{ }In a  MTL-algebra, the following are true:

\medskip
$(1)$ \  $ x\rightarrow  (y\vee z)=(x\rightarrow y)\vee  (x\rightarrow
z)$,

$(2)$ \  $ x\odot y\le  x\wedge y$,

$(3)$ \ $x'=x''', \ \  x\le x'', \ \ x'\odot x=0$,

$(4)$ \ if $x\vee x'=1$, then $x\wedge x'=0$,

where $x'=x\rightarrow 0.$

\medskip
Throughout this paper, $ L$ is  a MTL-algebra unless otherwise
specified.

We cite below some notations, definitions and basic results which
will be  needed in the sequel.

\paragraph{ }A non-empty  subset  $A$ of  $L$ is
called a  {\it filter}  of $L$  if it is closed undet the operation $\odot$ and for every $x\in A$, \ $x\le y$ implies $y\in A$.  It is easy to check that a non-empty subset $A$ of $L$ is a filter of $L$ if and only  if $1\in A$ and for all $x\in A$ from $x\rightarrow y\in A$ it follows $y\in A$.

\medskip
A  filter  $A$ of  $L$ is called:

$\bullet$ \ a {\it Boolean filter} if  $x\vee x'\in A$ for any $x\in L$,

$\bullet$  \ a {\it G-filter} if $ x\odot
x\rightarrow y\in A\Rightarrow x\rightarrow y\in A$ for any $x,y\in L$,

$\bullet$ \ an {\it MV-filter} if $ x\rightarrow y\in A\Rightarrow ((y\rightarrow
x)\rightarrow x)\rightarrow y\in A$ for any $x,y\in L$.

\paragraph{ }We now review some fuzzy logic concepts. A fuzzy set of  $L$ is a function $\mu: L\rightarrow [0,1]$.

Now, we recall some  the following concepts and results in \cite{7, 8, 20}.

\begin{definition}\label{D2.1}\rm  A fuzzy set $\mu$ of
$L$  is called  a {\it fuzzy filter} of $L$ if

$(F1)$ \ $\mu(x\odot y)\ge\min\{\mu(x),\mu(y)\}$ for all $x,y\in L$,

$(F2)$ \ it is order-preserving, that is, $x\le
y\Rightarrow \mu(x)\le \mu(y)$ for all $x,y\in L$.
\end{definition}

\begin{theorem}\label{T2.2} A fuzzy set $\mu$ of
$L$  is a {\it fuzzy filter} of $L$ if and only if

$(F3)$ \ $\mu(1) \ge\mu(x)$,

$(F4)$ \ $\mu(y)\ge\min\{\mu(x\rightarrow y),\mu(x)\}$

\noindent
is satisfied for all $x,y\in L$.
\end{theorem}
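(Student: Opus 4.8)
The plan is to prove the two implications separately, since the statement is an equivalence between the pair $(F1),(F2)$ of Definition~\ref{D2.1} and the pair $(F3),(F4)$. The whole argument rests on a few order-theoretic facts available in any MTL-algebra: that $1$ is the greatest element (so $x\le 1$ for every $x$), the characterization $x\le y\Leftrightarrow x\rightarrow y=1$, and—crucially—the Galois correspondence $a\odot b\le c\Leftrightarrow a\le b\rightarrow c$.

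For the forward direction, assume $(F1)$ and $(F2)$. Property $(F3)$ is immediate: since $x\le 1$ for all $x$, order-preservation $(F2)$ gives $\mu(x)\le\mu(1)$. For $(F4)$ I would first record the inequality $(x\rightarrow y)\odot x\le y$, which follows from the Galois correspondence applied to the trivially true $x\rightarrow y\le x\rightarrow y$. Then $(F1)$ yields $\mu((x\rightarrow y)\odot x)\ge\min\{\mu(x\rightarrow y),\mu(x)\}$, while $(F2)$ applied to $(x\rightarrow y)\odot x\le y$ gives $\mu((x\rightarrow y)\odot x)\le\mu(y)$; chaining these two produces $(F4)$.

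For the converse, assume $(F3)$ and $(F4)$. I would establish $(F2)$ first, because it will be needed for $(F1)$: if $x\le y$ then $x\rightarrow y=1$, so $(F4)$ gives $\mu(y)\ge\min\{\mu(1),\mu(x)\}$, and by $(F3)$ this minimum equals $\mu(x)$, whence $\mu(x)\le\mu(y)$. For $(F1)$ I would use the companion inequality $x\le y\rightarrow(x\odot y)$, again a direct instance of the Galois correspondence (from $x\odot y\le x\odot y$ read with $x$ on the left). Applying $(F4)$ with the pair $(y,\,x\odot y)$ gives $\mu(x\odot y)\ge\min\{\mu(y\rightarrow(x\odot y)),\mu(y)\}$, and the just-proved $(F2)$ together with $x\le y\rightarrow(x\odot y)$ gives $\mu(x)\le\mu(y\rightarrow(x\odot y))$; substituting yields $\mu(x\odot y)\ge\min\{\mu(x),\mu(y)\}$.

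The only real content is spotting the two residuation inequalities $(x\rightarrow y)\odot x\le y$ and $x\le y\rightarrow(x\odot y)$; once these are in hand the remaining steps are routine monotonicity and $\min$ bookkeeping. I therefore expect the main (though modest) obstacle to be choosing the correct instantiations of the Galois correspondence and, in the converse, remembering to derive $(F2)$ before $(F1)$ so that order-preservation is available when bounding $\mu(y\rightarrow(x\odot y))$ from below.
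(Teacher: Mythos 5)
Your proof is correct: both residuation instances, $(x\rightarrow y)\odot x\le y$ and $x\le y\rightarrow(x\odot y)$, are legitimate applications of the Galois correspondence, and your decision in the converse to derive $(F2)$ before $(F1)$ is exactly what makes the bound $\mu(x)\le\mu(y\rightarrow(x\odot y))$ available when it is needed. The paper states Theorem~\ref{T2.2} without proof, recalling it from \cite{7,8,20}, and your argument is the standard one given in those sources, so there is no divergence to report.
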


\begin{definition}\label{D2.3}\rm A fuzzy filter $\mu$ of $L$  is called
a {\it fuzzy Boolean filter }  of $L$  if
$\mu(x\vee  x')=\mu(1)$ holds for all $x\in L$.
\end{definition}

\begin{theorem}\label{T2.4} Let $\mu$ be a fuzzy filter of $L$,  then the following are equivalent:

\hspace*{2mm}$(i)$ \ $\mu$ is Boolean,

 \hspace*{1mm}$(ii)$ \ $\mu(x\rightarrow z)\ge\min\{\mu(x\rightarrow(z'\rightarrow y)),\mu(y\rightarrow z)\}$,

 $(iii)$ \ $\mu(x)\ge\mu((x\rightarrow y)\rightarrow x)$.
 \end{theorem}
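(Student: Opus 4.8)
The plan is to prove the cycle $(i)\Rightarrow(ii)\Rightarrow(iii)\Rightarrow(i)$. A single tool will be used repeatedly, so I would record it first as a preliminary observation: since $\mu$ is a fuzzy filter, it enjoys a \emph{fuzzy transitivity} property, namely $\mu(a\rightarrow c)\ge\min\{\mu(a\rightarrow b),\mu(b\rightarrow c)\}$ for all $a,b,c\in L$. This follows immediately from the residuated-lattice inequality $b\rightarrow c\le(a\rightarrow b)\rightarrow(a\rightarrow c)$ combined with order-preservation $(F2)$ and the filter condition $(F4)$ of Theorem~\ref{T2.2}.

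For $(i)\Rightarrow(ii)$, the main work of the proof, I would proceed in two steps. First, applying fuzzy transitivity with $a=x\odot z'$, $b=y$, $c=z$ and rewriting $(x\odot z')\rightarrow u=x\rightarrow(z'\rightarrow u)$ via the identity $x\rightarrow(u\rightarrow v)=(x\odot u)\rightarrow v$, one obtains
$$\mu\bigl(x\rightarrow(z'\rightarrow z)\bigr)\ge\min\{\mu(x\rightarrow(z'\rightarrow y)),\mu(y\rightarrow z)\}.$$
Second, I would show that the Boolean hypothesis lets one strip the inner implication, by isolating the identity $\mu\bigl((z'\rightarrow z)\rightarrow z\bigr)=\mu(1)$. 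Indeed $z\le(z'\rightarrow z)\rightarrow z$ (because $z\odot(z'\rightarrow z)\le z$) and $z'\le(z'\rightarrow z)\rightarrow z$ (by modus ponens $z'\odot(z'\rightarrow z)\le z$), so $z\vee z'\le(z'\rightarrow z)\rightarrow z$; order-preservation together with $\mu(z\vee z')=\mu(1)$ and $(F3)$ then force the claim. A final application of fuzzy transitivity with $a=x$, $b=z'\rightarrow z$, $c=z$ gives $\mu(x\rightarrow z)\ge\mu\bigl(x\rightarrow(z'\rightarrow z)\bigr)$, and chaining the two displays yields $(ii)$.

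The remaining implications are short specializations. For $(ii)\Rightarrow(iii)$, setting $x=1$ in $(ii)$ (so that $1\rightarrow z=z$) produces $\mu(s)\ge\min\{\mu(s'\rightarrow t),\mu(t\rightarrow s)\}$ for all $s,t\in L$; taking $s=x$ and $t=x\rightarrow y$ and using $x'\rightarrow(x\rightarrow y)=(x'\odot x)\rightarrow y=0\rightarrow y=1$ collapses the first argument to $\mu(1)$, leaving exactly $(iii)$. For $(iii)\Rightarrow(i)$, one already has $\mu(1)\ge\mu(x\vee x')$ from $(F3)$, so only the reverse inequality is needed; writing $w=x\vee x'$ and instantiating $(iii)$ at $x:=w$, $y:=0$ gives $\mu(w)\ge\mu(w'\rightarrow w)$, and since $'$ is order-reversing one has $w'\le x'\le w$, whence $w'\rightarrow w=1$ and $\mu(w)=\mu(1)$.

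The only genuinely delicate point is the middle step of $(i)\Rightarrow(ii)$: recognizing that the Boolean property should be used in the precise form $\mu\bigl((z'\rightarrow z)\rightarrow z\bigr)=\mu(1)$ and routing everything else through fuzzy transitivity. Once that identity is isolated, the rest is bookkeeping with the residuated-lattice identities $x\rightarrow(u\rightarrow v)=(x\odot u)\rightarrow v$ and $x'\odot x=0$. I do not anticipate any obstruction in the two easy implications beyond selecting the right substitutions, namely $x=1$ and $t=x\rightarrow y$ for $(ii)\Rightarrow(iii)$, and $y=0$ for $(iii)\Rightarrow(i)$.
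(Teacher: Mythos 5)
Your proof is correct, and there is in fact nothing in the paper to compare it against: Theorem~\ref{T2.4} is stated without proof, being recalled from the cited literature (the sentence preceding Definition~\ref{D2.1} attributes it to \cite{7, 8, 20}), so your cyclic argument $(i)\Rightarrow(ii)\Rightarrow(iii)\Rightarrow(i)$ supplies what the paper omits. Checking the details: your fuzzy transitivity lemma follows correctly from the listed inequality $x\rightarrow y\le(z\rightarrow x)\rightarrow(z\rightarrow y)$ combined with $(F2)$ and $(F4)$; in $(i)\Rightarrow(ii)$ the instantiation $a=x\odot z'$ together with the rewriting $x\rightarrow(z'\rightarrow u)=(x\odot z')\rightarrow u$ is valid, and the key identity $\mu\bigl((z'\rightarrow z)\rightarrow z\bigr)=\mu(1)$ is correctly obtained from $z\vee z'\le(z'\rightarrow z)\rightarrow z$ (both comparisons are legitimate Galois-correspondence consequences) plus the Boolean hypothesis and $(F3)$. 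The two specializations also check out: setting $x=1$, then $s=x$, $t=x\rightarrow y$, and using $x'\rightarrow(x\rightarrow y)=(x'\odot x)\rightarrow y=0\rightarrow y=1$ collapses $(ii)$ to $(iii)$; and in $(iii)\Rightarrow(i)$, with $w=x\vee x'$ and $y=0$, the chain $w'\le x'\le w$ is justified because negation is order-reversing --- a fact not on the paper's list but immediate from $x\rightarrow y\le(y\rightarrow z)\rightarrow(x\rightarrow z)$ with $z=0$ and property $(1)$ --- whence $w'\rightarrow w=1$ and $\mu(w)=\mu(1)$ by $(F3)$. Two minor presentational points: you should state explicitly that $(ii)$ and $(iii)$ are quantified over all $x,y,z\in L$, since your instantiations depend on it, and you may wish to spell out the one-line derivation of order-reversal of $'$ rather than asserting it.
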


\begin{definition}\label{D2.5}\rm A fuzzy  filter $\mu$ of $L$  is called

$\bullet$ \ a {\it fuzzy MV-filter }  if
 $\mu(x\rightarrow  y)\ge  \mu(((y\rightarrow x)\rightarrow
x)\rightarrow y)$,

$\bullet$ \ a {\it fuzzy G-filter } if
$\mu(x\odot x\rightarrow y)\ge  \mu(x\rightarrow y)$

\noindent for all $x,y\in L.$
\end{definition}

\section{Filteristic soft MTL-algebras}

 \paragraph { }  Molodtsov \cite{14} defined the soft set in the following
way: Let $U$ be an initial universe set and $E$ be a set of
parameters. Let $\mathcal{P}(U)$ denotes the power set of $U$ and
$A\subset E.$

A pair $(F,A)$ is called a {\it soft set} over $U,$ where $F$
is a mapping given by $F: A\to \mathcal{P}(U).$

In other words, a soft set over $U$ is a parameterized family of
subsets of the universe $U$. For $\varepsilon \in A,$
$F(\varepsilon)$ may be considered as the set of
$\varepsilon$-approximate elements of the soft set $(F,A).$

\begin{definition}\label{D3.1}\rm Let $(F,A)$ be a soft set over $L$. Then
$(F,A)$ is called a {\it filteristic soft MTL-algebra over $L$} if
$F(x)$ is a filter of $ L$ for all $x\in A$, for our convenience,
the empty set $\emptyset$ is regarded as a filter of $L$.
\end{definition}

\begin{example}\label{Ex3.2}\rm  Let $L=[0,1]$  and define  a product  $\odot$
and  a residuum  $\rightarrow$ on $L$  as follows:

\[x\odot y=\left\{\begin{array}{ll}x\wedge y & \mbox{ if $x+y>0.5,$  }\\
0& \mbox{ otherwise,}\end{array}\right.\ \
 x\rightarrow y=\left\{\begin{array}{ll}1 & \mbox{ if $x\le y$,  }\\
\max\{1-x,y\} & \mbox{ otherwise,}\end{array}\right.\]\\
 for all $x,y\in L$.  Then $L$ is an  MTL-algebra.

 Let $(F,A)$ be a soft set over $L$, where $A=(0,1]$ and $F:
 A\rightarrow \mathcal{P}(L)$ be a set-valued function defined by

$$  F(x)=\left\{\begin{array}{l l} L & \mbox{\ \ \ \ \
if\ \ \   }  0<x\le 0.5,\\ \{1\} & \mbox{\ \ \ \ \ if\ \ \ }
0.5<x\le 0.6,\\ \emptyset & \mbox{\ \ \ \ \ if\ \ \ } 0.8<x\le 1.
\end{array}\right.$$

Thus, $F(x)$ is a filter of $L$ for all $ x\in A$, and so $(F,A)$ is
a filteristic soft MTL-algebra over $L$.
\end{example}

For a fuzzy set $\mu$ in any MTL-algebra  $L$ and $A\subseteq
[0,1]$ we can consider two set-valued functions

 \[F: A\rightarrow \mathcal{P}(L), ~~t\mapsto \{x\in L\mid x_t\in
 \mu\}\]
and
 \[ F_{q}:A\rightarrow \mathcal{P}(L), ~~t\mapsto \{x\in L\mid x_t \, {\rm q} \, \mu\}.\]

Then $(F, A)$ and $(F_{q}, A)$  are called an {\it $\in$-soft
set} and {\it {\rm q}-soft set} over $L,$ respectively.

\begin{theorem}\label{T3.3} Let $\mu $ be a fuzzy set of $L$ and let
$(F,A)$ be an $\in$-soft set over $L$ with $A=(0,1]$. Then $(F,A)$ is a
filteristic soft MTL-algebra over $L$ if and only if $\mu$ is a
fuzzy filter of $L$.
\end{theorem}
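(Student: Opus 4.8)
The plan is to first unwind the definition of the $\in$-soft set: since $x_t \in \mu$ means exactly $\mu(x) \geq t$, the map sends each $t \in (0,1]$ to the cut set $F(t) = \{x \in L : \mu(x) \geq t\}$. The statement then becomes the classical level-set characterization: every cut set of $\mu$ is a filter (with $\emptyset$ admitted as a filter) if and only if $\mu$ is a fuzzy filter. I would prove the two implications separately, working from the form of the fuzzy-filter axioms given in Definition \ref{D2.1}, namely $(F1)$ the $\min$-subadditivity of $\odot$ and $(F2)$ order-preservation, since these match the two defining closure properties of an ordinary filter far more directly than the $(F3)$/$(F4)$ form of Theorem \ref{T2.2}.

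For the direction ``$\mu$ a fuzzy filter $\Rightarrow (F,A)$ filteristic,'' I would fix $t \in (0,1]$ and assume $F(t) \neq \emptyset$, the empty case being a filter by the stated convention. Closure under $\odot$ is immediate from $(F1)$: if $x, y \in F(t)$ then $\mu(x), \mu(y) \geq t$, so $\mu(x \odot y) \geq \min\{\mu(x), \mu(y)\} \geq t$. Upward closure follows from $(F2)$: if $x \in F(t)$ and $x \leq y$ then $\mu(y) \geq \mu(x) \geq t$. Hence $F(t)$ is a filter.

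For the converse, the key device is to choose the threshold $t$ equal to the relevant membership value. To recover $(F2)$, given $x \leq y$ I set $t = \mu(x)$: if $t = 0$ the inequality $\mu(x) \leq \mu(y)$ is trivial, and if $t > 0$ then $t \in (0,1]$ with $x \in F(t)$, so $F(t)$ is a nonempty filter and upward closure gives $y \in F(t)$, i.e. $\mu(y) \geq t = \mu(x)$. To recover $(F1)$, given $x, y$ I set $t = \min\{\mu(x), \mu(y)\}$; again $t = 0$ is immediate, while for $t > 0$ both $x, y$ lie in the filter $F(t)$, whence $x \odot y \in F(t)$ and $\mu(x \odot y) \geq t$.

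The only points requiring care are the boundary cases where a membership value equals $0$, so that the candidate threshold falls outside $A = (0,1]$, and the bookkeeping ensuring $F(t)$ is treated as nonempty precisely when the relevant element sits inside it; both are dispatched by the trivial observation and the empty-set-is-a-filter convention. I do not expect any genuine obstacle here: the argument is a routine threshold translation, and the substance of the proof is simply matching each fuzzy-filter axiom to its corresponding filter closure property through the cut sets.
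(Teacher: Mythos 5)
Your proof is correct, and it deviates from the paper's own argument in two small but genuine ways. First, you pair the primary definitions on both sides: you check that each nonempty level set $F(t)$ is closed under $\odot$ and upward closed, matching this against axioms $(F1)$--$(F2)$ of Definition \ref{D2.1}. The paper instead works with the deduction-style forms throughout: it checks $1\in F(t)$ and that $x\in F(t)$, $x\rightarrow y\in F(t)$ imply $y\in F(t)$, matching this against $(F3)$--$(F4)$ of Theorem \ref{T2.2}. Both pairings are legitimate, since the preliminaries record the equivalence of the two filter characterizations and Theorem \ref{T2.2} records the fuzzy analogue; your route is slightly more self-contained (it never invokes Theorem \ref{T2.2}), while the paper's choice makes this proof the template that is reused almost verbatim in Theorems \ref{T3.4}, \ref{T3.6}, \ref{T3.8}--\ref{T3.10}, where the residuum-based form is the natural one. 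Second, your converse is direct: you set $t=\mu(x)$ (respectively $t=\min\{\mu(x),\mu(y)\}$) and apply the closure properties of $F(t)$, splitting off the boundary case $t=0$ where the candidate threshold falls outside $A=(0,1]$. The paper argues by contradiction, choosing a threshold strictly between the allegedly violated values (e.g.\ $\mu(1)<t\le\mu(a)$, or $\mu(b)<s\le\min\{\mu(a\rightarrow b),\mu(a)\}$), which automatically places $t$ in $(0,1]$ and so never meets the $t=0$ case, at the cost of an indirect formulation. The two arguments have the same substance and length; your handling of the empty-level-set convention and of the $t=0$ boundary is exactly the care the direct version requires, so there is no gap.
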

\begin{proof} Let $\mu$ be a fuzzy filter of $L$ and $t\in A.$ If $x\in
F(t)$, then $x_t\in\mu$, and so $1_t\in\mu$, i.e., $1\in F(t)$. Let
$x,y\in L$ be such that $x, x\rightarrow y\in F(t)$. Then
$x_t\in\mu$ and $(x\rightarrow y)_t\in\mu$, and so
$y_{\min\{t,t\}}=y_t\in\mu$. Hence $y\in F(t)$. This proves that $(F,A)$ is a
filteristic soft MTL-algebra over $L$.

Conversely, assume that $(F,A)$ is a filteristic soft MTL-algebra
over $L$. If there exists $a\in L$ such that $\mu(1)<\mu(a)$, then
we can choose $t\in A$ such that $\mu(1)<t\le\mu(a)$. Thus,
$1_t\overline{\in}\mu$, i.e., $1\overline{\in} F(t)$. This is a
contradiction. Hence, $\mu(1)\ge \mu(x)$, for all $x\in L$. If there
exist $a,b\in L$ such that $\mu(b)<s\le \min\{\mu(a\rightarrow
b),\mu(a)\}$. Then $(a\rightarrow b)_s\in\mu$ and $a_s\in\mu$, but
$b_s\overline{\in}\mu$, that is, $a\rightarrow b\in F(s)$ and $a\in
F(s)$, but $b\overline{\in} F(s)$, contradiction, and so, $\mu(y)\ge
\min\{\mu(x\rightarrow y),\mu(x)\}$, for all $x,y\in L$. Therefore,
$\mu$ is a fuzzy filter of  $L$.
\end{proof}

\begin{theorem}\label{T3.4} Let $\mu$ be a fuzzy set of $L$ and $(F_{
q},A)$ a $q$-soft set over $L$ with $A=(0,1]$. Then the following are
equivalent:

\hspace*{1mm}$(i)$ \ $\mu$ is a fuzzy filter of $L$,

$(ii)$ \ $\forall t\in A$ each non-empty $F_{q}(t)$ is a filter of $L$.
\end{theorem}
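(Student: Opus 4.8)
The plan is to unwind the definition of the $q$-soft set: for each $t\in(0,1]$ we have
$$F_{q}(t)=\{x\in L\mid \mu(x)+t>1\}=\{x\in L\mid \mu(x)>1-t\},$$
so that $F_{q}(t)$ is simply the strict superlevel set of $\mu$ at the threshold $1-t$. With this reformulation both implications become direct translations of the filter axioms ``$1\in A$, and $x,x\rightarrow y\in A\Rightarrow y\in A$'' into statements about $\mu$, matched against the fuzzy-filter conditions $(F3)$ and $(F4)$ of Theorem~\ref{T2.2}.

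For $(i)\Rightarrow(ii)$ I would fix a $t$ with $F_{q}(t)\neq\emptyset$ and verify the two filter conditions directly. Choosing any $x\in F_{q}(t)$ gives $\mu(x)>1-t$; then $(F3)$ yields $\mu(1)\ge\mu(x)>1-t$, so $1\in F_{q}(t)$. Next, if $x\in F_{q}(t)$ and $x\rightarrow y\in F_{q}(t)$, then $\mu(x),\mu(x\rightarrow y)>1-t$, and $(F4)$ gives $\mu(y)\ge\min\{\mu(x\rightarrow y),\mu(x)\}>1-t$, hence $y\in F_{q}(t)$. This direction is routine once the superlevel-set description is in hand.

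The substance is in $(ii)\Rightarrow(i)$, which I would argue by contraposition: assuming $(F3)$ or $(F4)$ fails, I construct a single threshold $t$ at which $F_{q}(t)$ is non-empty yet not a filter. If $(F3)$ fails there is $a$ with $\mu(1)<\mu(a)$; taking $t=1-\mu(1)$ gives $\mu(a)+t>1$ but $\mu(1)+t=1$, so $a\in F_{q}(t)$ while $1\notin F_{q}(t)$, contradicting the filter property. If instead $(F4)$ fails there are $a,b$ with $\mu(b)<\min\{\mu(a\rightarrow b),\mu(a)\}$; taking $t=1-\mu(b)$ makes $a,a\rightarrow b\in F_{q}(t)$ but $b\notin F_{q}(t)$, again contradicting closure under the filter rule.

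The only point demanding care is the boundary bookkeeping in the choice of $t$: the threshold must stay inside $(0,1]$, and the strictness of quasi-coincidence ($\mu(x)+t>1$, not $\ge$) must be respected. The trick is to choose $t$ so that the would-be excluded element sits exactly on the boundary ($\mu(1)+t=1$, resp.\ $\mu(b)+t=1$) while the required elements clear it strictly; this is what makes each contradiction sharp. Checking $t>0$ then reduces to the trivial observations $\mu(1)<1$ and $\mu(b)<1$, both forced by the very inequalities assumed to fail, so I expect no genuine obstacle beyond this careful handling of strict versus non-strict bounds.
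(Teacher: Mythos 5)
Your proposal is correct and takes essentially the same route as the paper's proof: the forward direction verifies $1\in F_{q}(t)$ and closure under $x,\,x\rightarrow y\in F_{q}(t)\Rightarrow y\in F_{q}(t)$ directly from $(F3)$ and $(F4)$, and the converse derives a contradiction from a failure of $(F3)$ or $(F4)$ by choosing a threshold $t$ with $\mu(1)+t\le 1$ (resp. $\mu(b)+t\le 1$) while the witnessing elements satisfy the quasi-coincidence strictly. Your explicit boundary choices $t=1-\mu(1)$ and $t=1-\mu(b)$ simply instantiate the paper's ``for some $t\in A$'' at the endpoint, and your verification that these thresholds lie in $(0,1]$ is sound.
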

\begin{proof}
Let $\mu$ be a fuzzy filter of $L$ and let $F_{q}(t)\ne\emptyset$ for any $t\in A$. If $1\overline{\in}F_{q}(t)$, then $1_t \overline{q}\mu$, and so $\mu(1)+t<1$. Then
$\mu(x)+t\le\mu(1)+t<1$ for all $x\in L$, and so $F_{q}(t)=\emptyset$, contradiction. Hence $1\in F_{q}(t)$.

Let $x,y\in L$ be such that $x\rightarrow y\in F_{q}(t)$ and $x\in
F_{q}(t)$. Then $(x\rightarrow y)_t q \mu$ and $x_t q\mu$, or
equivalently, $\mu(x\rightarrow y)+t>1$ and $\mu(x)+t>1$. Thus,
$$
\mu(y)+t\ge\min\{\mu(x\rightarrow y),\mu(x)\}+t=\min\{\mu(x\rightarrow y)+t, \mu(x)+t\}>1,
$$
and so $y_t q \mu,$ i.e., $y\in F_{q}(t)$. Hence $F_{q}(t)$
is a filter of $L$.

Conversely, assume that the condition $(ii)$ holds. If $\mu(1)<\mu(a)$
for some $a\in L$, then $\mu(1)+t\le 1<\mu(a)+t$ for some $t\in A.$
Thus, $a_t q\mu$, and so $F_{q}(t)\ne\emptyset$. Hence $1\in
F_{q}(t)$, and so $1_t q\mu$, i.e.,$ \mu(1)+t>1$, contradiction.
Hence $\mu(1)\ge\mu(x)$ for all $x\in L$.

If there exist $a,b\in L$ such that $\mu(b)<\min\{\mu(a\rightarrow
b),\mu(a)\}$. Then
$$
\mu(b)+s\le 1<\min\{\mu(a\rightarrow b),\mu(a)\}+s
$$
for some $s\in A$. Hence $(a\rightarrow b)_s q\mu$
and $a_s q\mu$, i.e., $a\rightarrow b\in F_{q}(s)$ and $a\in
F_{q}(s)$. Since $F_{q}(s)$ is a filter of $L$, we have
$b\in F_{q}(s)$, and so $b_s q\mu$, that is, $\mu(b)+s>1$,
contradiction. Hence $\mu(y)\ge \min\{\mu(x\rightarrow y),
\mu(x)\}$, for all $x,y\in L$. Therefore $\mu$ is a fuzzy filter of
$L.$
\end{proof}

\begin{definition}\label{D3.5}\rm  A fuzzy set $\mu$ of $L$ is  an
 {\it  $(\in,\ivq)$-fuzzy  filter }  of $L$ if for all $x,y\in L$ it satisfies:

 $(F5)$ \ $\mu(1)\ge\min\{\mu(x), 0.5\}$,

 $(F6)$ \  $\mu(y)\ge\min\{\mu(x\rightarrow y),\mu(x), 0.5\}$.
\end{definition}

\begin{theorem}\label{T3.6} Let $\mu$ be a fuzzy set of $L$ and
$(F,A)$ be an $\in$-soft set over $L$ with $A=(0,0.5]$. Then the
following are equivalent:

\hspace*{1mm}$(i)$ \ $\mu$ is an $(\in,\ivq)$-fuzzy filter of $L$,

$(ii)$ \ $(F,A)$ is a filteristic soft MTL-algebra over $ L$.
\end{theorem}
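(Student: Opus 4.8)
The plan is to unwind both conditions into statements about the level sets $F(t)=\{x\in L\mid \mu(x)\ge t\}$ for $t\in(0,0.5]$, and then to exploit the filter characterization recalled in Section 2: a non-empty subset is a filter exactly when it contains $1$ and is closed under the rule ``$x,\ x\rightarrow y$ inside $\Rightarrow y$ inside''. Since $(F,A)$ is the $\in$-soft set, each $F(t)$ is precisely such a level set, and the empty ones count as filters by convention. The decisive structural feature I would flag from the outset is that the parameter domain is $A=(0,0.5]$, so every threshold $t$ we ever deal with satisfies $t\le 0.5$; this is exactly what lets the constant $0.5$ appearing in $(F5)$ and $(F6)$ be absorbed into the $\min$ and disappear.

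For $(i)\Rightarrow(ii)$, I would fix $t\in(0,0.5]$ and assume $F(t)\ne\emptyset$ (the empty case being a filter by convention). Picking any $x\in F(t)$ gives $\mu(x)\ge t$, and then $(F5)$ yields $\mu(1)\ge\min\{\mu(x),0.5\}\ge\min\{t,0.5\}=t$ because $t\le 0.5$; hence $1\in F(t)$. For closure, if $x,\ x\rightarrow y\in F(t)$ then $\mu(x)\ge t$ and $\mu(x\rightarrow y)\ge t$, so $(F6)$ gives $\mu(y)\ge\min\{\mu(x\rightarrow y),\mu(x),0.5\}\ge t$, again using $t\le 0.5$; thus $y\in F(t)$. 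By the filter characterization, $F(t)$ is a filter, so $(F,A)$ is a filteristic soft MTL-algebra.

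For the converse $(ii)\Rightarrow(i)$, I would argue by contradiction, the standard move for this kind of level-set statement. If $(F5)$ failed there would be $a\in L$ with $\mu(1)<\min\{\mu(a),0.5\}$; choosing $t$ with $\mu(1)<t\le\min\{\mu(a),0.5\}$ puts $t$ in $(0,0.5]$ and makes $a\in F(t)$, so $F(t)$ is non-empty and hence, being a filter, contains $1$, forcing $\mu(1)\ge t$, a contradiction. The argument for $(F6)$ is parallel: if it failed for some $a,b$, I would choose $t$ with $\mu(b)<t\le\min\{\mu(a\rightarrow b),\mu(a),0.5\}$, observe $a,\ a\rightarrow b\in F(t)$, and use that the filter $F(t)$ must then contain $b$, i.e.\ $\mu(b)\ge t$, contradicting the choice of $t$.

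The routine computations are trivial; the only point requiring care—and the one I regard as the real content—is the bookkeeping on the threshold $t$ in the converse. One must verify that the chosen $t$ genuinely lands in $(0,0.5]$ (strict positivity coming from $t$ exceeding a nonnegative quantity, and $t\le 0.5$ coming from the $\min$ with $0.5$), and that the clause $\min\{\cdots,0.5\}$ in $(F5)$/$(F6)$ is exactly what guarantees such a $t$ can be interposed. This calibration between the constant $0.5$ in the filter conditions and the range $(0,0.5]$ of the soft-set parameter is the crux; once it is observed, both directions are short.
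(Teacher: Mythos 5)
Your proposal is correct and follows essentially the same route as the paper's proof: both directions unwind the conditions on the level sets $F(t)$, with the forward direction absorbing the constant $0.5$ via $t\le 0.5$, and the converse proceeding by contradiction with an interposed threshold $t\in(\mu(b),\min\{\mu(a\rightarrow b),\mu(a),0.5\}]$ (the paper uses the midpoint, you use any point of the interval---an immaterial difference). Your explicit treatment of the empty level set and the check that the chosen $t$ lands in $(0,0.5]$ are minor tidyings of details the paper leaves implicit.
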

\begin{proof} Let $\mu$ be an $(\in,\ivq)$-fuzzy filter of $L$. For
any $t\in A$, we have $\mu(1)\ge\min\{\mu(x), 0.5\}$ for all $x\in
F(t)$ by Definition \ref{D3.5}. Hence
$\mu(1)\ge\min\{\mu(x),0.5\}\ge\min\{t,0.5\}=t$, which implies,
$1_t\in\mu$, and so  $1\in F(t)$. If $x\rightarrow y\in F(t)$ and
$x\in F(t)$, then $(x\rightarrow y)_t\in\mu$ and $x_t\in\mu$, that
is, $\mu(x\rightarrow y)\ge t$ and $\mu(x)\ge t$. Now, by $(F6)$, we have
$$
\mu(y)\ge\min\{\mu(x\rightarrow y),\mu(x),0.5\}\ge\min\{t,0.5\}=t,
$$
which implies, $y_t\in\mu$, and so $y\in F(t)$. Thus, $(F,A)$ is a
filteristic soft MTL-algebra  over $L$.

Now assume that the condition $(ii)$ holds. If there exists
$a\in L$ such that $\mu(1)<\min\{\mu(a),0.5\}$, then
$\mu(1)<t\le\min\{\mu(a), 0.5\}$ for some $t\in A$. It follows that
$1_t\overline{\in} \mu$, i.e., $1\overline{\in} F(t)$,
contradiction. Hence $\mu(1)\ge\min\{\mu(x),0.5\}$ for all $x\in L$.
If there exist $a,b\in L$ such that $\mu(b)<\min\{\mu(a\rightarrow
b),\mu(a),0.5\}$, then taking
$t=\frac{1}{2}(\mu(b)+\min\{\mu(a\rightarrow b),\mu(a),0.5\})$, we
have $t\in A$ and
$$
\mu(b)<t<\min\{\mu(a\rightarrow b),\mu(a),0.5\},
$$
which implies, $a\rightarrow b\in F(t), a\in F(t)$, but
$b\overline{\in} F(t)$, contradiction. It follows from Definition
\ref{D3.5}  that $\mu$ is an $(\in,\ivq)$-fuzzy filter of $L$.
\end{proof}

\begin{definition}\label{D3.7}\rm \cite{9} A fuzzy set $\mu$  of $L$
is called an {\it $(\overline{\in},\overline{\in} \vee
\overline{q})$-fuzzy filter} of $L$ if and only if for $x,y\in L$ it satisfies:

$(F7)$ \ $\max\{\mu(1),0.5\}  \ge  \mu(x)$,

$(F8)$ \  $\max\{\mu(y),0.5\}\ge\min\{\mu(x\rightarrow y), \mu(x)\}$.
\end{definition}

\begin{theorem}\label{T3.8} Let $\mu$ be a fuzzy set of $L$ and
$(F,A)$ be an $\in$-soft set over $L$ with $A=(0.5,1]$. Then the
following are equivalent:

\hspace*{1mm}$(i)$ \ $\mu$ is an $(\overline{\in},\overline{\in} \vee\overline{
q})$-fuzzy filter of $L$,

$(ii)$ \ $(F,A)$ is a filteristic soft MTL-algebra over $L$.
\end{theorem}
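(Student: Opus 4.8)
The plan is to mirror the template established in Theorems \ref{T3.3} and \ref{T3.6}, using the characterization from Section 2 that a nonempty subset is a filter precisely when it contains $1$ and satisfies the rule: $x \in F(t)$ and $x\rightarrow y \in F(t)$ imply $y \in F(t)$. Throughout I record that $F(t)=\{x\in L:\mu(x)\ge t\}$ and that the index set $A=(0.5,1]$ forces every relevant threshold to exceed $0.5$. This single fact is what lets the truncations $\max\{\,\cdot\,,0.5\}$ appearing in $(F7)$ and $(F8)$ collapse at the right moments. Since the empty set counts as a filter, in the forward direction I only need to treat nonempty $F(t)$.

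For $(i)\Rightarrow(ii)$, I would fix $t\in A$ and assume $F(t)\ne\emptyset$, so some $x$ has $\mu(x)\ge t>0.5$. Applying $(F7)$ gives $\max\{\mu(1),0.5\}\ge\mu(x)\ge t$; because $t>0.5$, the maximum cannot be attained by $0.5$, hence $\mu(1)\ge t$ and $1\in F(t)$. For closure, if $x\rightarrow y\in F(t)$ and $x\in F(t)$ then $\min\{\mu(x\rightarrow y),\mu(x)\}\ge t$, and $(F8)$ yields $\max\{\mu(y),0.5\}\ge t>0.5$, again forcing $\mu(y)\ge t$, i.e.\ $y\in F(t)$. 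Thus each $F(t)$ is a filter and $(F,A)$ is a filteristic soft MTL-algebra.

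For the converse $(ii)\Rightarrow(i)$, I argue by contradiction in the standard fashion. If $(F7)$ failed there would be $a$ with $\max\{\mu(1),0.5\}<\mu(a)$; choosing $t\in(\max\{\mu(1),0.5\},\mu(a)]$ automatically gives $t>0.5$, so $t\in A$, and $a\in F(t)\ne\emptyset$ forces $1\in F(t)$, i.e.\ $\mu(1)\ge t>\max\{\mu(1),0.5\}\ge\mu(1)$, a contradiction. The failure of $(F8)$ is handled symmetrically: assuming $\max\{\mu(b),0.5\}<\min\{\mu(a\rightarrow b),\mu(a)\}$, I pick $s$ in the (nonempty) interval $(\max\{\mu(b),0.5\},\min\{\mu(a\rightarrow b),\mu(a)\}]$, verify $s\in A$, note $a\rightarrow b,a\in F(s)$, invoke the filter property to get $b\in F(s)$, and reach $\mu(b)\ge s>\mu(b)$.

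The only delicate point — and the one on which the choice $A=(0.5,1]$ is essential — is the repeatedly used step that upgrades an inequality $\max\{\mu(z),0.5\}\ge t$ to $\mu(z)\ge t$; this is legitimate exactly because $t>0.5$. The main obstacle is therefore not any computation but bookkeeping: keeping careful track of strict versus non-strict inequalities so that the chosen thresholds $t,s$ genuinely land in the half-open interval $(0.5,1]$ and each contradiction closes cleanly.
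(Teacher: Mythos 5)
Your proposal is correct and follows essentially the same route as the paper's proof: the forward direction uses $t>0.5$ to collapse $\max\{\,\cdot\,,0.5\}$ in $(F7)$ and $(F8)$ so that $F(t)$ contains $1$ and is closed under the filter rule, and the converse proceeds by contradiction, choosing a threshold in the gap between the two sides of the violated inequality. If anything, your write-up is slightly more careful than the paper's, explicitly handling the empty-set convention and the strictness of inequalities (the paper's converse even contains a small typo, writing $1\overline{\in} F(a)$ where $F(t)$ is meant), so no changes are needed.
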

\begin{proof} Let $\mu$ be an $(\overline{\in},\overline{\in} \vee\overline{ q})$-fuzzy
filter of $L$. For any $t\in A$, by Definition \ref{D3.7}, we have
 $\mu(x)\le\max\{\mu(1), 0.5\}$ for all $x\in F(t)$. Thus,
$t\le\mu(x)\le\max\{\mu(1),0.5\}=\mu(1)$, which implies $1_t\in\mu$,
i.e., $1\in F(t)$.

Let $x,y\in L$ be such that $x\rightarrow y\in
F(t)$ and $x\in F(t)$, then $(x\rightarrow y)_t\in\mu$ and
$x_t\in\mu$, i.e., $\mu(x\rightarrow y)\ge t$ and $\mu(x)\ge t$. It
follows from Definition \ref{D3.7} that
$$
t\le\min\{\mu(x\rightarrow
y),\mu(x)\}\le\max\{\mu(y),0.5\}=\mu(y),
$$
which implies, $y_t\in\mu$, i.e., $y\in F(t)$. Hence $F(t) $ is a
filter of $L$ for all $t\in A$, and so $(F,A)$ is a filteristic soft
MTL-algebra  over $L.$

Now, assume that $(F,A)$ is a filteristic soft MTL-algebra
over $L$. If there exists $a\in L$ such that $\mu(a)\ge
\max\{\mu(1),0.5\}$, then $\mu(a)\ge t>\max\{\mu(1),0.5\} $ for some
$t\in A$, and so $\mu(1)<t$.  Thus, $1\overline{\in} F(a)$.  Contradiction.
Hence $\mu(x)\le \max\{\mu(1),0.5\}$ for all $x\in L.$ If there
exist $a,b\in L$ such that $\min\{\mu(a\rightarrow b),\mu(a)\}\ge
t>\max\{\mu(b),0.5\}$ for some $t\in A$, then $(a\rightarrow b)_t$ and $a_t$ are in $\mu$. But $b_t\overline{\in}\mu$, therefore $a\rightarrow b,a\in F(t)$. This is a contradiction since
$b\overline{\in} F(t)$. It follows from Definition \ref{D3.7} that $\mu$ is an
$(\overline{\in},\overline{\in} \vee\overline{ q})$-fuzzy filter of
$L.$
\end{proof}

Next, we give the following two important results by $q$-soft
sets.

\begin{theorem}\label{T3.9} Let $\mu$ be a fuzzy set of $L$ and
$(F_{q},A)$ be a q-soft set over $L$ with $A=(0,0.5]$. Then
$(F_{q},A)$ is a filteristic soft MTL-algebra over $ L$ if and
only if $\mu$ is an $(\overline{\in},\overline{\in} \vee
\overline{q})$-fuzzy filter of $L$.
\end{theorem}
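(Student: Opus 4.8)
The plan is to prove both implications directly, combining the $q$-soft set computations from the proof of Theorem~\ref{T3.4} with the $0.5$-threshold bookkeeping from the proof of Theorem~\ref{T3.8}. The single observation that drives everything is that for $t\in A=(0,0.5]$ one has $1-t\ge 0.5$, so membership $x\in F_q(t)$, i.e.\ $\mu(x)+t>1$, already forces $\mu(x)>1-t\ge 0.5$. This is exactly what lets the constant $0.5$ inside the terms $\max\{\cdot,0.5\}$ of $(F7)$ and $(F8)$ be stripped off.

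For the forward direction, suppose $\mu$ is an $(\overline{\in},\overline{\in}\vee\overline{q})$-fuzzy filter and fix $t\in A$ with $F_q(t)\ne\emptyset$. First I would show $1\in F_q(t)$: choosing any $a\in F_q(t)$ gives $\mu(a)>0.5$, so $(F7)$ yields $\max\{\mu(1),0.5\}\ge\mu(a)>0.5$, which forces $\mu(1)>0.5$ and hence $\mu(1)=\max\{\mu(1),0.5\}\ge\mu(a)>1-t$, that is $\mu(1)+t>1$. For the detachment condition, if $x\rightarrow y\in F_q(t)$ and $x\in F_q(t)$ then both $\mu(x\rightarrow y)$ and $\mu(x)$ exceed $1-t\ge 0.5$, so $\min\{\mu(x\rightarrow y),\mu(x)\}>0.5$; feeding this into $(F8)$ and again collapsing the $\max$ gives $\mu(y)\ge\min\{\mu(x\rightarrow y),\mu(x)\}>1-t$, i.e.\ $y\in F_q(t)$. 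Thus every non-empty $F_q(t)$ is a filter, and $(F_q,A)$ is a filteristic soft MTL-algebra.

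For the converse I would argue by contraposition and manufacture a witnessing threshold, exactly as in the converse parts of Theorems~\ref{T3.4} and~\ref{T3.8}. If $(F7)$ fails there is $a$ with $\mu(a)>\max\{\mu(1),0.5\}$; since then $\mu(a)>0.5$ and $\mu(a)>\mu(1)$, the interval $(1-\mu(a),\min\{0.5,1-\mu(1)\}]$ is a non-empty subset of $(0,0.5]$, and any $t$ in it satisfies $a\in F_q(t)$ yet $1\notin F_q(t)$, contradicting that the non-empty $F_q(t)$ is a filter. Similarly, if $(F8)$ fails there are $a,b$ with $m:=\min\{\mu(a\rightarrow b),\mu(a)\}>\max\{\mu(b),0.5\}$; then any $t$ in the non-empty interval $(1-m,\min\{0.5,1-\mu(b)\}]\subseteq(0,0.5]$ places both $a\rightarrow b$ and $a$ in $F_q(t)$ but keeps $b$ outside, again contradicting the filter property through detachment. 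Hence $\mu$ satisfies $(F7)$ and $(F8)$.

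The only delicate point, and the step I expect to demand the most care, is the choice of the threshold $t$ in the converse: one must verify simultaneously that the candidate interval is non-empty, that it lies inside $(0,0.5]$, and that its points give strict $q$-membership on one side while failing $q$-membership on the other. All three requirements rest on the failure hypotheses producing $\mu$-values strictly above $0.5$, which is precisely what guarantees $1-(\text{value})<0.5$ and hence that the interval actually meets $(0,0.5]$. Once this bookkeeping is in place, the filter axioms of $F_q(t)$ yield the contradictions at once.
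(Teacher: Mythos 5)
Your proposal is correct and follows essentially the same route as the paper's own proof: the forward implication is the same computation (your explicit stripping of the $0.5$ from $\max\{\mu(1),0.5\}$ and $\max\{\mu(y),0.5\}$ using $t\le 0.5$ is exactly the step the paper leaves implicit when it passes from $\max\{\mu(1),0.5\}+t>1$ to $\mu(1)+t>1$), and your converse manufactures the witnessing threshold in the interval $(1-\mu(a),\min\{0.5,1-\mu(1)\}]$, which coincides with the paper's choice of $t$ satisfying $\max\{\mu(1),0.5\}+t\le 1<\mu(a)+t$. If anything, your version is slightly more careful than the paper's, since you explicitly verify non-emptiness of the interval and handle the empty $F_{q}(t)$ case.
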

\begin{proof} Let $(F_{q},A)$ be a filteristic soft MTL-algebra  over
$ L$, then $F_{q}(t)$ is a filter of $L$ for all $t\in A$. If
$\max\{\mu(1),0.5\}<\mu(a)$ for some $a\in L$, then
$\max\{\mu(1),0.5\}+t\le 1<\mu(a)+t$ for some $t\in A.$ Thus, $1_t\overline{q}\mu$,
which is impossible. Hence $\max\{\mu(1),0.5\}\ge\mu(x)$ for all $x\in L$.

If there exist $a,b\in L$ such that
$\max\{\mu(b),0.5\}<\min\{\mu(a\rightarrow b),\mu(a)\}$. Then
$\max\{\mu(b),0.5\}+s\le 1<\min\{\mu(a\rightarrow b),\mu(a)\}+s$ for
some $s\in A$. Hence $(a\rightarrow b)_s q\mu$ and $a_s q\mu$, i.e.,
$a\rightarrow b\in F_{q}(s)$ and $a\in F_{q}(s)$. Since
$F_{q}(s)$ is a filter of $L$, we have $b\in F_{q}(s)$, and
so $b_s q\mu$, that is, $\mu(b)+s>1$, contradiction. Hence
$\max\{\mu(y),0.5\}\ge \min\{\mu(x\rightarrow y), \mu(x)\}$, for all
$x,y\in L$. Therefore $\mu$ is an  $(\overline{\in},\overline{\in}
\vee \overline{q})$-fuzzy filter of $L.$

Conversely, let $\mu$ be an  $(\overline{\in},\overline{\in} \vee
\overline{q})$-fuzzy filter of $L$. For any $t\in A$. By Definition \ref{D3.7}, we have $\mu(x)\le\max\{\mu(1),0.5\}$ for all $x\in F_{\rm
q}(t)$,  and so  $\max\{\mu(1),0.5\}+t\ge\mu(x)+t>1$. Hence
$\mu(1)+t>1$, that is, $1\in F_{q}(t)$.  Let $x,y\in L$ be such
that $x\rightarrow y\in F_{q}(t)$ and $x\in F_{q}(t)$. Then
$(x\rightarrow y)_t q \mu$ and $x_t q\mu$, or equivalently,
$\mu(x\rightarrow y)+t>1$ and $\mu(x)+t>1$. By Definition \ref{D3.7}, we have
$$
\max\{\mu(y),0.5\}+t\ge\min\{\mu(x\rightarrow y),\mu(x)\}+t=\min\{\mu(x\rightarrow y)+t, \mu(x)+t\}>1,
$$
and so $y_t q\mu,$ i.e., $y\in F_{q}(t).$ Hence $F_{q}(t)$
is a filter of $L$, and so $(F_{q},A)$ is a filteristic soft
MTL-algebra  over $L$.
  \end{proof}

\begin{theorem}\label{T3.10} Let $\mu$ be a fuzzy set of $L$ and
$(F_{q},A)$ be a q-soft set over $L$ with $A=(0.5,1]$. Then
$(F_{q},A)$ is a filteristic soft MTL-algebra over $L$ if and only
if $\mu$ is an  $(\in,\ivq)$-fuzzy filter of $L$.
\end{theorem}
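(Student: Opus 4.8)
The plan is to follow the same template as the proof of Theorem~\ref{T3.9}, but with the threshold interval reversed: here $A=(0.5,1]$, so for every $t\in A$ the complementary quantity satisfies $1-t<0.5$. This single arithmetical observation is what makes the $0.5$ occurring in $(F5)$ and $(F6)$ interact correctly with the quasi-coincidence relation $x_t\,{\rm q}\,\mu$, which by definition means $\mu(x)+t>1$, equivalently $\mu(x)>1-t$. Throughout I may assume $F_{q}(t)\neq\emptyset$, since the empty set counts as a filter by convention.

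For the direction ``$\mu$ is an $(\in,\ivq)$-fuzzy filter $\Rightarrow$ $(F_{q},A)$ is filteristic'', I would fix $t\in A$ and verify that $F_{q}(t)$ is a filter. To see $1\in F_{q}(t)$, pick any $x\in F_{q}(t)$, so $\mu(x)>1-t$; since also $0.5>1-t$, the quantity $\min\{\mu(x),0.5\}$ exceeds $1-t$, and $(F5)$ then gives $\mu(1)>1-t$, i.e.\ $1_t\,{\rm q}\,\mu$. For the closure property, suppose $x\rightarrow y,\,x\in F_{q}(t)$, so $\mu(x\rightarrow y)>1-t$ and $\mu(x)>1-t$; together with $0.5>1-t$ this makes all three arguments of the minimum in $(F6)$ strictly larger than $1-t$, whence $\mu(y)>1-t$ and $y\in F_{q}(t)$. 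This half is purely computational once the inequality $1-t<0.5$ is in hand.

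For the converse I would argue by contraposition, exactly as in Theorems~\ref{T3.6} and \ref{T3.9}. If $(F5)$ fails, there is $a$ with $\mu(1)<\min\{\mu(a),0.5\}$; I then need a witness $t\in(0.5,1]$ with $1-\mu(a)<t\le 1-\mu(1)$, which places $a$ in $F_{q}(t)$ while keeping $1$ out, contradicting that $F_{q}(t)$ is a filter. Similarly, if $(F6)$ fails at some pair $a,b$, I seek $t\in(0.5,1]$ with $1-\min\{\mu(a\rightarrow b),\mu(a)\}<t\le 1-\mu(b)$, so that $a\rightarrow b,\,a\in F_{q}(t)$ but $b\notin F_{q}(t)$, again a contradiction.

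The one step requiring genuine care---the main obstacle---is checking that these witnessing intervals are non-empty and lie inside $A=(0.5,1]$. This is where the hypotheses $\mu(1)<0.5$ (resp.\ $\mu(b)<0.5$), coming from the failed minimum, are essential: they guarantee $1-\mu(1)>0.5$ (resp.\ $1-\mu(b)>0.5$), so the upper endpoint of the interval exceeds $0.5$; and the strict inequalities $\mu(1)<\mu(a)$ (resp.\ $\mu(b)<\min\{\mu(a\rightarrow b),\mu(a)\}$) guarantee the lower endpoint is strictly below the upper one. Once both facts are verified, a suitable $t$ can be chosen (e.g.\ at a midpoint of the interval), and the two contradictions close the argument, yielding $(F5)$ and $(F6)$.
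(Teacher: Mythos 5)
Your proof is correct and takes essentially the same route as the paper's: the direction from the $(\in,\ivq)$-fuzzy filter axioms to filteristicness is the same computation exploiting $t>0.5$ (the paper writes $\min\{\mu(x)+t,0.5+t\}>1$ where you write $1-t<0.5$), and the converse is the same contradiction argument with a witness threshold satisfying $\mu(b)+s\le 1<\min\{\mu(a\rightarrow b),\mu(a),0.5\}+s$. If anything, you are more explicit than the paper in checking that the witnessing interval actually meets $A=(0.5,1]$, a point the paper passes over with ``for some $t\in A$.''
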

\begin{proof} Let $(F_{q},A)$ be a filteristic soft MTL-algebra  over
$L$. Then $F_{q}(t)$ is a filter of $L$ for all $t\in A$.  If
$\mu(1)<\min\{\mu(a),0.5\}$ for some $a\in L$, then $\mu(1)+t\le
1<\min\{\mu(a),0.5\}+t$ for some $t\in A.$ Thus,
$1_t\overline{q}\mu$, contradiction. Hence
$\mu(1)\ge\min\{\mu(x),0.5\}$ for all $x\in L$.

If there exist $a,b\in L$ such that $\mu(b)<\min\{\mu(a\rightarrow
b),\mu(a),0.5\}$. Then $\mu(b)+s\le 1<\min\{\mu(a\rightarrow
b),\mu(a),0.5\}+s$ for some $s\in A$. Hence $(a\rightarrow b)_s
q\mu$ and $a_s q\mu$, i.e., $a\rightarrow b\in F_{q}(s)$ and
$a\in F_{q}(s)$. Since $F_{q}(s)$ is a filter of $L$, we
have $b\in F_{q}(s)$, and so $b_s q\mu$, that is, $\mu(b)+s>1$,
contradiction. Hence $\mu(y)\ge \min\{\mu(x\rightarrow y), \mu(x),
0.5\}$, for all $x,y\in L$. Therefore $\mu$ is an $(\in,\ivq)$-fuzzy
filter of $L.$

Conversely, let $\mu$ be an $(\in,\ivq)$-fuzzy filter of $L$. By Definition \ref{D3.5}, we have
$\mu(1)\ge\min\{\mu(x),0.5\}$ for all $x\in F_{q}(t)$,  and so
$\mu(1)+t\ge\min\{\mu(x),0.5\}+t=\min\{\mu(x)+t,0.5+t\}>1$. Hence
$\mu(1)+t>1$, that is, $1\in F_{q}(t)$.

Now, let $x,y\in L$ be such
that $x\rightarrow y\in F_{q}(t)$ and $x\in F_{q}(t)$. Then
$(x\rightarrow y)_t q \mu$ and $x_t q\mu$, or equivalently,
$\mu(x\rightarrow y)+t>1$ and $\mu(x)+t>1$. Thus
\[
\begin{array}{rl}
\mu(y)+t&\ge\min\{\mu(x\rightarrow y),\mu(x),0.5\}+t\\[3pt]
&=\min\{\mu(x\rightarrow y)+t, \mu(x)+t, 0.5+t\}>1,
\end{array}
\]
and so $y_t q \mu,$ i.e., $y\in F_{q}(t).$ Hence $F_{q}(t)$
is a filter of $L$, and consequently, $(F_{q},A)$ is a filteristic soft
MTL-algebra  over $L$.
\end{proof}

\begin{definition}\label{D3.11}\rm For $0<\alpha<\beta\leq 1$ a fuzzy  set $\mu$ of $L$ is
called a {\it fuzzy filter with thresholds $(\alpha,\beta]$} if for all $x,y\in L$:

\hspace*{1.5mm}$(F9)$ \  $\max\{\mu(1), \alpha\}\ge \min\{\mu(x) , \beta \}$,

$(F10)$ \ $\max\{\mu(y), \alpha\}\ge \min\{\mu(x\rightarrow y), \mu(x),
\beta\}$.
\end{definition}

\begin{theorem}\label{T3.12}
Let $\mu$ be a fuzzy set of $L$. Then an $\in$-soft set $(F,A)$ over $L$ with
$A=(\alpha,\beta]\subset (0,1]$ is a filteristic soft  MTL-algebra over $L$ if and only if $\mu$ is a fuzzy filter with thresholds $(\alpha,\beta]$.
\end{theorem}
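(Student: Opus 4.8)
The plan is to mimic the structure of the two-directional arguments in Theorems~\ref{T3.6} and \ref{T3.8}, since the threshold condition $(\alpha,\beta]$ is the common generalization: $(F5),(F6)$ correspond to $\alpha=0,\beta=0.5$, while $(F7),(F8)$ correspond to $\alpha=0.5,\beta=1$. Throughout, recall that $x\in F(t)$ means $x_t\in\mu$, i.e.\ $\mu(x)\ge t$, and that $t$ ranges over $A=(\alpha,\beta]$, so $\alpha<t\le\beta$.

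For the forward direction, I would assume $\mu$ is a fuzzy filter with thresholds $(\alpha,\beta]$ and fix $t\in A$; the goal is to show each non-empty $F(t)$ is a filter. To get $1\in F(t)$, take any $x\in F(t)$ so $\mu(x)\ge t>\alpha$, and apply $(F9)$: $\max\{\mu(1),\alpha\}\ge\min\{\mu(x),\beta\}\ge\min\{t,\beta\}=t>\alpha$, which forces $\mu(1)\ge t$ (the $\alpha$ term cannot be the max since $\alpha<t$), hence $1\in F(t)$. For the closure condition, suppose $x\rightarrow y\in F(t)$ and $x\in F(t)$, so $\mu(x\rightarrow y)\ge t$ and $\mu(x)\ge t$; then $(F10)$ gives $\max\{\mu(y),\alpha\}\ge\min\{\mu(x\rightarrow y),\mu(x),\beta\}\ge\min\{t,t,\beta\}=t>\alpha$, again forcing $\mu(y)\ge t$, so $y\in F(t)$. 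Using the filter characterization from the Preliminaries ($1\in A$ together with the implication closure), this shows $F(t)$ is a filter and hence $(F,A)$ is a filteristic soft MTL-algebra.

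For the converse, I would assume $(F,A)$ is a filteristic soft MTL-algebra and derive $(F9),(F10)$ by contradiction, exactly as in the earlier theorems. If $(F9)$ fails, there is $a\in L$ with $\max\{\mu(1),\alpha\}<\min\{\mu(a),\beta\}$; choosing $t$ strictly between these two values gives $t\in A=(\alpha,\beta]$ (since $\alpha<t$ and $t\le\beta$) with $\mu(a)\ge t$ but $\mu(1)<t$, so $a\in F(t)$ while $1\notin F(t)$, contradicting that $F(t)$ is a filter. Similarly, if $(F10)$ fails for some $a,b$, pick $t$ with $\max\{\mu(b),\alpha\}<t<\min\{\mu(a\rightarrow b),\mu(a),\beta\}$ so that $a\rightarrow b,a\in F(t)$ but $b\notin F(t)$, again contradicting closure under the implication rule. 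Concluding $(F9)$ and $(F10)$ hold shows $\mu$ is a fuzzy filter with thresholds $(\alpha,\beta]$.

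The only delicate point, and the one I would watch most carefully, is the interplay between the strict/non-strict endpoints of $A=(\alpha,\beta]$ and the $\max$/$\min$ with $\alpha,\beta$: one must verify that the chosen intermediate $t$ genuinely lands in $(\alpha,\beta]$ and that the $\alpha$-term in each $\max$ can be discarded (this uses $t>\alpha$) while the $\beta$-term in each $\min$ does not spuriously weaken the bound (this uses $t\le\beta$). These inequalities are exactly where the $\in$-soft set hypothesis $A=(\alpha,\beta]$ is consumed, so the argument is otherwise a routine threshold generalization of the preceding proofs.
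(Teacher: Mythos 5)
Your proposal is correct and follows essentially the same route as the paper's own proof: the threshold direction uses $(F9)$, $(F10)$ with $\min\{t,\beta\}=t>\alpha$ to force $\mu(1)\ge t$ and $\mu(y)\ge t$, and the converse derives $(F9)$, $(F10)$ by choosing an intermediate $t\in(\alpha,\beta]$ and contradicting that $F(t)$ is a filter, exactly as in the paper (which merely presents the two directions in the opposite order). Your explicit attention to why the $\alpha$-term in the $\max$ can be discarded and why the chosen $t$ lands in $(\alpha,\beta]$ is in fact slightly more careful than the paper's terse treatment.
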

\begin{proof} Let $(F,A)$ be a filteristic soft MTL-algebra as in Theorem. If there exists $a\in L$ such that
$\max\{\mu(1),\alpha\}<\min\{\mu(a),\beta\}$, then
$\max\{\mu(1),\alpha\}<t\le\min\{\mu(a),\beta\}$  for some $t\in
(\alpha,\beta]$. Thus $1\overline{\in}F(t)$ which is a contradiction. If there  exist $a,b\in L$ such that
$\max\{\mu(b),\alpha\}<t\le\min\{\mu(a\rightarrow
b),\mu(a),\beta\}.$ Hence $(a\rightarrow b)_t\in\mu, a_t\in\mu$. But
$b_t\overline{\in}\mu$, therefore $a\rightarrow b\in F(t), a\in F(t).$ This also is a contradiction since
$b\overline{\in} F(t)$. Consequently, $\mu$ is a fuzzy filter with thresholds $(\alpha,\beta]$ of $L.$

On the other hand, if $\mu$ is a fuzzy filter with thresholds $(\alpha,\beta]$, then, by $(F9)$, we have
$\max\{\mu(1),\alpha\}\ge \min\{\mu(x),\beta\}$ for all $x\in F(t)$.
Thus, $\max\{\mu(1),\alpha\}\ge \min\{\mu(x),\beta\}\ge
\min\{t,\beta\}=t>\alpha$, which implies, $\mu(1)\ge t$, i.e.,
$1_t\in \mu$. Hence $1\in F(t)$. Let $x,y\in L$ be such that
$x\rightarrow y \in F(t)$ and $x\in F(t)$. Thus, $(x\rightarrow y)_t
\in \mu$ and $x_t \in \mu$, i.e., $\mu(x\rightarrow y)\ge t$ and
$\mu(x)\ge t.$ By $(F10)$, we have
$\max\{\mu(y),\alpha\}\ge \min\{\mu(x\rightarrow
y),\mu(x),\beta\}\ge \min\{t,\beta\}=t>\alpha$, and so $\mu(y) \ge
t$, i.e., $y_t\in \mu$, and so $y\in F(t)$. Therefore, $(F,A)$ is a
filteristic soft MTL-algebra over $L$.
\end{proof}

\section{Boolean (MV-, G-) filteristic soft MTL-algebras}

\paragraph { } In this section divided in three parts we describe some types of generalized fuzzy filters of
MTL-algebras introduced in \cite{10}. In the first part we
characterize Boolean filteristic soft MTL-algebras; in the second
-- G-filtersistic soft MTL-algebras; in third -- MV-filtersistic
soft MTL-algebras which are natural generalizations of Boolean
filters, G-filters and MV-filters, respectively. Finally,
 we describe relationship between these soft MTL-algebras.

\subsection{Boolean  filteristic soft MTL-algebras}
\paragraph{ } We start with the following definition.

\paragraph{Definition 4.1.1.}\label{D4.1.1}\rm A soft set $(F,A)$ over $L$ is called a {\it Boolean filteristic soft MTL-algebra over $L$}
if $F(x)$ is a Boolean filter of $L$ for all $x\in A$.
The empty set is treatment as a Boolean filter.\\

\paragraph{Example 4.1.2.}\label{Ex4.1.2} Consider the set
$L=\{0,a,b,1\}$ with two operations defined by the following
tables:

\begin{center}
\begin{tabular}{c|cccc}
          $\odot$ & 0 & $a$ & $b$ & 1  \\ \hline
          0 & 0 & 0 & 0 & 0  \\
          $a$ & 0 & $a$ & $a$ & $a$ \\
          $b$ & 0 & $a$ & $a$ & $b$  \\
          1 & 0 & $a$ & $b$ & 1
   \end{tabular}
\ \ \ \ \ \ \ \ \
\begin{tabular}{c|cccc}
          $\rightarrow$ & 0 & $a$ & $b$ & 1  \\ \hline
          0 & 1 & 1 & 1 & 1 \\
          $a$ & $0$ & 1 & 1 & 1  \\
          $b$ & 0 & $b$ & 1 & 1  \\
          1 & 0 & $a$ & $b$ & 1
\end{tabular}
  \end{center}
Then $(L,\wedge,\vee,\odot,\rightarrow, 0,1)$, where $\wedge$ and $\vee$ are $\min$ and
$\max$ operations, respectively, is an MTL-algebra.

Let $(F,A)$ be a soft set over $L$, where $A=(0,1]$ and $F:
A\rightarrow \mathcal{P}(L)$ be a set-valued function defined by

$$  F(x)=\left\{\begin{array}{c l} \{0,a,b,1\} & \mbox{\ \ \ \ \
if\ \ \   }  0<x\le 0.4,\\ \{1,a,b\} & \mbox{\ \ \ \ \ if\ \ \ }
0.4<t\le 0.8,\\ \emptyset & \mbox{\ \ \ \ \ if\ \ \ } 0.8<t\le 1.
\end{array}\right.
$$
Thus, $F(x)$ is a Boolean filter of $L$ for all $ x\in A$, and so
$(F,A)$ is a Boolean filteristic soft MTL-algebra over $L$.

\paragraph{ }The following proposition is obvious.

\paragraph{Proposition 4.1.3.}\label{P4.1.3} {\it A Boolean
filteristic MTL-algebra is a filteristic MTL-algebra. }

\paragraph{Theorem 4.1.4.}\label{T4.1.4} {\it
Let $\mu$ be a fuzzy set of $L$. Then an $\in$-soft set $(F,A)$
over $L$ with $A=(0,1]$ is a Boolean filteristic soft MTL-algebra
over $L$ if and only if $\mu$ is a fuzzy Boolean filter of $L$.}
\begin{proof}
Let $(F,A)$ an $\in$-soft set $(F,A)$ over $L$ with $A=(0,1]$. If
it is a Boolean filteristic soft MTL-algebra over $L$, then, by
Proposition 4.1.3, it is a filteristic soft MTL-algebra over $L$,
and so $\mu$ is a fuzzy filter of $L$ (Theorem \ref{T3.3}). If
there exist $a,b,c\in L$ such that $\mu(a\rightarrow c)<s\le
\min\{\mu(a\rightarrow(c'\rightarrow b)),\mu(b\rightarrow c)\}$
for some $s\in A$. Then $(a\rightarrow b)_s\mu$ and $a_s\in\mu$,
but $b_s\overline{\in}\mu$, that is, $a\rightarrow (c'\rightarrow
b)\in F(s)$ and $b\rightarrow c\in F(s)$. Thus $a\rightarrow
c\overline{\in} F(s)$, which is a contradiction. Therefore, $\mu$
is a fuzzy Boolean filter of $L$.

Conversely, if $\mu$ is a fuzzy Boolean filter of $L$, then it is
also a fuzzy filter of $L$ and, by Theorem \ref{T3.3}, $(F,A)$ is
a filteristic soft MTL-algebra over $L$.  Let $x,y,z\in L$ be such
that $x\rightarrow (z'\rightarrow y), y\rightarrow z\in F(t)$.
Then $(x\rightarrow (z'\rightarrow y))_t\in\mu$ and $(y\rightarrow
z)_t\in\mu$. Hence, by Theorem \ref{T2.4}, we obtain
$\mu(x\rightarrow z)\ge\min\{\mu(x\rightarrow(z'\rightarrow
y)),\mu(y\rightarrow z)\}\ge t$, and so   $x\rightarrow z\in
F(t)$. This proves (Theorem \ref{T2.4}) that $(F,A)$ is a Boolean
filteristic soft MTL-algebra over $L$.
\end{proof}

\paragraph{Theorem 4.1.5.}\label{T4.1.5}
{\it Let $\mu$ be a fuzzy set of $L$. If $(F_{q},A)$, where
$A=(0,1]$, is a q-soft set over $L$, then $\mu$ is a fuzzy Boolean
filter if and only if each non-empty $F_{q}(t)$ is a Boolean
filter.}

\begin{proof} Let $\mu$ be a fuzzy  Boolean  filter of
$L$. Then, by Theorem \ref{T3.4}, $F_{q}(t)$ is a filter of $L$.
Let $x,y,z\in L$ be such that $x\rightarrow(z'\rightarrow y)\in
F_{q}(t)$ and $y\rightarrow z\in F_{q}(t)$. Then
$(x\rightarrow(z'\rightarrow y))_t q \mu$ and $(y\rightarrow z)_t
q\mu$, or equivalently, $\mu(x\rightarrow (z'\rightarrow y))+t>1$
and $\mu(y\rightarrow z)+t>1$. Since $\mu$ is a fuzzy  Boolean of
$L$, we have
\[\arraycolsep=.5mm
\begin{array}{rl}
\mu(x\rightarrow z)+t&\ge\min\{\mu(x\rightarrow (z'\rightarrow
y)),\mu(y\rightarrow z)\}+t\\[2pt]
&=\min\{\mu(x\rightarrow (z'\rightarrow z))+t, \mu(y\rightarrow
z)+t\}>1, \end{array}
 \] and so $(x\rightarrow z)_t q$, i.e.,
$x\rightarrow z\in F_{\rm q}(t)$. This proves (Theorem \ref{T2.4})
that $F_{q}(t)$ is a Boolean filter of $L$.

Conversely, assume that each non-empty $F_{q}(t)$ is a Boolean
filter of $L$. Then $\mu$ is a fuzzy filter of $L$ by Theorem
\ref{T3.4}. If there exist $a,b,c\in L$ such that
$\mu(a\rightarrow c)<\min\{\mu(a\rightarrow (c'\rightarrow
b)),\mu(b\rightarrow c)\}.$ Then $\mu(a\rightarrow c)+s\le
1<\min\{\mu(a\rightarrow (c'\rightarrow b)),\mu(b\rightarrow
c)\}+s$ for some $s\in A$. Hence $(a\rightarrow (c'\rightarrow
b))_s q\mu$ and $(b\rightarrow c)_s q\mu$, but $(a\rightarrow c)_t
\overline{q}\mu$,  i.e., $a\rightarrow (c'\rightarrow b)\in F_{\rm
q}(s)$ and $b\rightarrow c\in F_{q}(s)$, but $a\rightarrow
c\overline{\in} F_{q}(t),$ contradiction. Therefore $\mu$ is a
fuzzy Boolean filter of $L$.
\end{proof}

\paragraph{Definition 4.1.6.}\label{D4.1.6} An $(\in,\in\vee q)$-fuzzy filter $\mu$ of $L$ is called
an {\it $(\in,\in\vee q)$-fuzzy Boolean filter} of $L$ if
$$
\mu(x\rightarrow  z)\ge \min\{\mu(x\rightarrow(z'\rightarrow y)),
\mu(y\rightarrow z),0.5 \}
$$
holds for all $x\in L.$

\paragraph{Theorem 4.1.7.}\label{T4.1.7} {\it Let $\mu$ be a fuzzy set of $L$. Then
an $\in$-soft set $(F,A)$ over $L$ with $A=(0,0.5]$ is a Boolean
filteristic soft MTL-algebra if and only if $\mu$ is an
$(\in,\ivq)$-fuzzy Boolean filter of $L$.}

\begin{proof} Let an $\in$-soft set $(F,A)$, where $A=(0,0.5]$, be a Boolean
filteristic soft MTL-algebra. If there exist $a,b,c\in L$ such
that
$$
\mu(a\rightarrow c)<\min\{\mu(a\rightarrow (c'\rightarrow
b)),\mu(b\rightarrow c),0.5\},
$$
then for
$$
t=\frac{1}{2}(\mu(a\rightarrow c)+ \min\{\mu(a\rightarrow
(c'\rightarrow b),\mu(b\rightarrow c),0.5\})
$$
we have $t\in A$ and
$$
\mu(a\rightarrow c)<t<\min\{\mu(a\rightarrow (c'\rightarrow
b)),\mu(b\rightarrow c),0.5\},
$$
which implies $ a\rightarrow (c'\rightarrow b)\in F(t)$ and
$b\rightarrow c\in F(t)$. This is a contradiction since
$a\rightarrow c\overline{\in} F(t)$. So, $\mu$ is an
$(\in,\ivq)$-fuzzy Boolean filter of $L$.

Conversely, if $\mu$ is an $(\in,\ivq)$-fuzzy Boolean filter of
$L$, then, by Theorem \ref{T3.6}, $(F,A)$ is a filteristic soft
MTL-algebra. Moreover, if $x,y,z\in L$ be such that
$x\rightarrow(z'\rightarrow y)\in F(t)$ and $y\rightarrow z\in
F(t)$ for some $t\in A$, then $\mu(x\rightarrow (z'\rightarrow
y))\ge t$ and $\mu(y\rightarrow z)\ge t$. Thus,
$$
\mu(x\rightarrow z)\ge\min\{\mu(x\rightarrow (z'\rightarrow
y)),\mu(y\rightarrow z),0.5\}\ge\min\{t,0.5\}=t,
$$
which implies $(x\rightarrow z)_t\in\mu$, and so $x\rightarrow
z\in F(t)$. Hence, $(F,A)$ is a Boolean  filteristic soft
MTL-algebra over $L$.
\end{proof}

\paragraph{Definition 4.1.8.}\label{D4.1.8}   An $(\overline{\in},\overline{\in} \vee
\overline{q})$-fuzzy filter of $L$ is called an {\it
$(\overline{\in},\overline{\in} \vee \overline{q})$-fuzzy Boolean
filter} of $L$ if it satisfies:
$$
\max\{\mu(x\rightarrow z),0.5\}\ge
\min\{\mu(x\rightarrow(z'\rightarrow y)),\mu(y\rightarrow z)\}
$$
for all $x\in L$.

\paragraph{Theorem 4.1.9.}\label{T4.1.9} {\it For a fuzzy set $\mu$ of $L$ and
an $\in$-soft set $(F,A)$ over $L$ with $A=(0.5,1]$ the following
conditions are equivalent:

$(i)$ \ $\mu$ is an $(\overline{\in},\overline{\in} \vee\overline{
q})$-fuzzy Boolean filter of $L$,

$(ii)$ \ $(F,A)$ is a Boolean  filteristic soft MTL-algebra of
$L$.}
\begin{proof}  Let $\mu$ be an $(\overline{\in},\overline{\in} \vee\overline{ q})$-fuzzy
Boolean  filter of $L$, then $\mu$ is also an
$(\overline{\in},\overline{\in}\vee\overline{q})$-fuzzy filter of
$L$ and, by Theorem \ref{T3.8}, $(F,A)$ is a filteristic soft
MTL-algebra. Let $x,y,z\in L$ be such that $x\rightarrow
(z'\rightarrow y)\in F(t)$ and $y\rightarrow z\in F(t)$ for some
$t\in A$, then $(x\rightarrow (z'\rightarrow y))_t\in\mu$ and
$(y\rightarrow z)_t\in\mu$, i.e., $\mu(x\rightarrow (z'\rightarrow
y))\ge t$ and $\mu(y\rightarrow z)\ge t$. Thus,
$$
t\le\min\{\mu(x\rightarrow (z'\rightarrow y)),\mu(y\rightarrow
z)\}\le\max\{\mu(x\rightarrow z),0.5\}=\mu(x\rightarrow z),
$$
which implies $(x\rightarrow z)_t\in\mu$, i.e., $x\rightarrow z\in
F(t)$. Hence $F(t)$ is a Boolean filter of $L$, and so $(F,A)$ is
a Boolean filteristic soft MTL-algebra over $L$.

Conversely, assume that $(F,A)$ is a Boolean filteristic soft
MTL-algebra over $L$. If for some $t\in A$ there exist $a,b,c\in
L$ such that
$$
\min\{\mu(a\rightarrow (c'\rightarrow b)),\mu(b\rightarrow c)\}\ge
t>\max\{\mu(a\rightarrow c),0.5\},
$$
then $(a\rightarrow (c'\rightarrow c))_t\in \mu$ and
$(b\rightarrow c)_t\in \mu$ but $(a\rightarrow
c)_t\overline{\in}\mu$. This means that $a\rightarrow
(c'\rightarrow b)\in F(t)$, $b\rightarrow c\in F(t)$, but
$a\rightarrow c\overline{\in} F(t)$, which is a contradiction.
Therefore, $\mu$ is an $(\overline{\in},\overline{\in}
\vee\overline{q})$-fuzzy Boolean filter of $L$.
\end{proof}

\paragraph{ }Now, we give the following two important characterizations of Boolean filterstic $q$-soft
sets.

\paragraph{Theorem 4.1.10.}\label{T4.1.10} {\it Let $\mu$ be a fuzzy set of $L$. Then
a $q$-soft set $(F_{q},A)$ over $L$ with $A=(0,0.5]$ is a Boolean
filteristic soft MTL-algebra over $ L$ if and only if $\mu$ is an
$(\overline{\in},\overline{\in} \vee \overline{q})$-fuzzy Boolean
filter of $L$.}

\medskip
The proof is similar to the proof of Theorem \ref{T3.9}.

\paragraph{Theorem 4.1.11.}\label{T4.1.11}{\it Let $\mu$ be a fuzzy set of $L$. Then
a $q$-soft set $(F_{q},A)$ of $L$ with $A=(0.5,1]$ is a Boolean
filteristic soft MTL-algebra over $ L$ if and only if $\mu$ is an
$(\in,\ivq)$-fuzzy Boolean filter of $L$.}

\medskip
The proof is similar to the proof of Theorem \ref{T3.10}.

\paragraph{} As a consequence of Theorems 3.6, 4.1.5 and 4.1.7 we obtain

\paragraph{Theorem 4.1.12.}\label{T4.1.12} {\it For a fuzzy set $\mu$ of $L$ and an $\in$-soft set $(F,A)$
over $L$ with $A=(\alpha,\beta]$, where $0<\alpha<\beta\leq 1$,
the following conditions are equivalent:

$(i)$ \ $\mu$ is a fuzzy Boolean filter with thresholds
$(\alpha,\beta]$ of $L$,

$(ii)$ \ $(F,A)$ is a Boolean filteristic soft MTL-algebra over
$L$.}

\subsection{MV-filteristic soft MTL-algebras}

\paragraph{ } In this subsection, we characterize MV-filteristic soft MTL-algebras by fuzzy MV-filters.

\paragraph{Definition 4.2.1.} A soft set $(F,A)$ over $L$ is called an {\it MV-filteristic soft MTL-algebra over
$L$} if $F(x)$ is an MV-filter of $L$ for all $x\in A$. The empty
set is treatment as an MV-filter of $L$.

\paragraph{Example 4.2.2.}\label{E42.2}  Let $L=\{0,a,b,1\}$ be a chain with operations defined by the following
two tables:

\begin{center}
\begin{tabular}{c|cccc}
          $\odot$ & 0 & $a$ & $b$ & 1  \\ \hline
          0 & 0 & 0 & 0 & 0  \\
          $a$ & 0 & $0$ & $0$ & $a$ \\
          $b$ & 0 & $0$ & $a$ & $b$  \\
          1 & 0 & $a$ & $b$ & 1
   \end{tabular}
\ \ \ \ \ \ \ \ \
\begin{tabular}{c|cccc}
          $\rightarrow$ & 0 & $a$ & $b$ & 1  \\ \hline
          0 & 1 & 1 & 1 & 1 \\
          $a$ & $b$ & 1 & 1 & 1  \\
          $b$ & $a$ & $b$ & 1 & 1  \\
          1 & 0 & $a$ & $b$ & 1
\end{tabular}
  \end{center}
Then $(L,\wedge,\vee,\odot,\rightarrow, 0,1)$, where $\wedge$ and
$\vee$ are $\min$ and $\max$ operations, respectively, is an
MTL-algebra.

Let $(F,A)$ be a soft set over $L$, where $A=(0,1]$ and $F:
A\rightarrow \mathcal{P}(L)$ be a set-valued function defined by

$$  F(x)=\left\{\begin{array}{c l} \{0,a,b,1\} & \mbox{\ \
if\ \ \   }  0<x\le 0.4,\\ \{1\} & \mbox{\ \  if\ \ \ } 0.4<t\le
0.8,\\ \emptyset & \mbox{\ \ if\ \ \ } 0.8<t\le 1.
\end{array}\right.
$$
Thus, $F(x)$ is an MV-filter of $L$ for all $ x\in A$, and so
$(F,A)$ is an MV-filteristic soft MTL-algebra over $L$.

\paragraph{ }From the above definitions, we can get the following:

\paragraph{Proposition 4.2.3.} {\it Every MV-filteristic MTL-algebra
 is a filteristic MTL-algebra, but the converse may not be true. }

\paragraph{Theorem 4.2.4.} {\it Let $\mu$ be a fuzzy set of $L$. Then
an $\in$-soft set $(F,A)$  over $L$ with $A=(0,1]$ is an
MV-filteristic soft MTL-algebra over $L$ if and only if $\mu$ is a
fuzzy MV-filter of $L$.}

\medskip

The proof is similar to the proof of Theorem 4.1.4.

\paragraph{Theorem 4.2.5.} {\it For a fuzzy set $\mu$ of $L$ and
a $q$-soft set $(F_{q},A)$ over $L$ with $A=(0,1]$ the following
conditions are equivalent:

$(i)$ \ $\mu$ is a fuzzy MV-filter of $L$,

$(ii)$ \ each non-empty $F_{q}(t)$ is an MV-filter of $L$.}

\medskip

The proof is similar to the proof of Theorem 4.1.5.

\paragraph{Definition 4.2.6.} An $(\in,\in\vee
q)$-fuzzy filter $\mu$ of $L$ is called an {\it $(\in,\in\vee
q)$-fuzzy MV-filter } of $L$ if
\[
\mu(((y\rightarrow x)\rightarrow x)\rightarrow y)\ge
\mathrm{min}\{\mu(x\rightarrow y), 0.5\}
\]
is satisfied for all $x,y\in L$.

\paragraph{Theorem 4.2.7.} {\it For a fuzzy set $\mu$ of $L$ and
an $\in$-soft set $(F,A)$ over $L$ with $A=(0,0.5]$ the following
conditions are equivalent:

$(i)$ \ $\mu$ is an $(\in,\ivq)$-fuzzy MV-filter of $L$,

$(ii)$ \ $(F,A)$ is an MV-filteristic soft MTL-algebra over $L$.}

\medskip

The proof is similar to the proof of Theorem 4.1.7.

\paragraph{Definition 4.2.8.} An $(\overline{\in},\overline{\in} \vee
\overline{q})$-fuzzy filter of $L$ is called an {\it
$(\overline{\in},\overline{\in} \vee \overline{q})$-fuzzy
 MV-filter} of $L$ if
\[
\max\{\mu(((y\rightarrow x)\rightarrow x)\rightarrow y),0.5\}\ge
\min\{\mu(x\rightarrow y)\}
\]
is satisfied for all $x,y\in L$.

\paragraph{Theorem 4.2.9.} {\it For a fuzzy set $\mu$ of $L$ and
an $\in$-soft set $(F,A)$ over $L$ with $A=(0.5,1]$ the following
conditions are equivalent:

$(i)$ \ $\mu$ is an $(\overline{\in},\overline{\in} \vee\overline{
q})$-fuzzy MV-filter of $L$,

$(ii)$ \ $(F,A)$ is an MV-filteristic soft MTL-algebra over $L.$}

\medskip
The proof is similar to the proof of Theorem 4.1.9.

\paragraph{Theorem 4.2.10.} {\it Let $\mu$ be a fuzzy set of $L$. Then
a $q$-soft set $(F_{q},A)$ over $L$ with $A=(0,0.5]$ is an
MV-filteristic soft soft MTL-algebra if and only if $\mu$ is an
$(\overline{\in},\overline{\in}\vee\overline{q})$-fuzzy MV-filter
of $L$.}

\medskip
The proof is similar to the proof of Theorem 3.9.

\paragraph{Theorem 4.2.11.} {\it Let $\mu$ be a fuzzy set of $L$. Then
a $q$-soft set $(F_{q},A)$ over $L$ with $A=(0.5,1]$ is a
filteristic soft MV-algebra over $L$ if and only if $\mu$ is an
$(\in,\ivq)$-fuzzy MV-filter of $L$.}

\medskip
The proof is similar to the proof of Theorem 3.10.

\medskip
As a consequence of Theorems 3.7, 4.2.5 and 4.2.7 we obtain

\paragraph{Theorem 4.2.12.} {\it For a fuzzy set $\mu$ of $L$ and an $\in$-soft set $(F,A)$
over $L$ with $A=(\alpha,\beta]$, where $0<\alpha<\beta\leq 1$,
the following conditions are equivalent:

$(i)$ \ $\mu$ is a fuzzy  MV-filter with thresholds
$(\alpha,\beta]$ of $L$,

$(ii)$ \ $(F,A)$ is an MV-filteristic soft MTL-algebra over $L$.}

\medskip
From Theorems 4.1.6, 4.1.7, 4.1.8, 4.2.6, 4.2.7, 4.2.9 and Theorem
3.20 in \cite{7} it follows

\paragraph{Theorem 4.2.13.} {\it Let $\mu$ be a fuzzy set of $L$. If an
$\in$-soft set $(F,A)$ over $L$ with $A=(\alpha,\beta]\subset
(0,1]$ is a Boolean filteristic soft MTL-algebra, then it also is
an MV-filteristic soft MTL-algebra, but the converse may not be
true.}

\subsection{G-filteristic soft MTL-algebras}

\paragraph{ } Now, we describe filteristic soft MTL-algebras connected with G-filters.

\paragraph{Definition 4.3.1.} A soft set $(F,A)$ over $L$ is called a {\it G-filteristic soft MTL-algebra over $L$}
if $F(x)$ is a G-filter of $ L$ for all $x\in A$. The empty set is
regarded as a G-filter of $L$.

\medskip

Since G-filter is a filter every G-filteristic MTL-algebra is a
filteristic MTL-algebra, but the converse is not be true in
general.

\paragraph{Example 4.3.2.} Consider on $L=[0,1]$ two operations $\odot$
and $\rightarrow$ defined by the following tables:
$$
\begin{tabular}{c | c c c c
c c} $\odot$& $0$ & $a$ &  $b$ &  $c$ &  $d$& $1$\\\hline
$0$ &  $0$ &  $0$ & $0$ & $0$ & $0$ & $0$\\
$a$ &  $0$ &  $a$ & $c$ & $c$ & $0$ & $a$\\
$b$ &  $0$ &  $c$ & $b$ & $c$ & $d$ & $b$\\
$c$ &  $0$ &  $c$ & $c$ & $c$ & $0$ & $c$\\
$d$ &  $0$ &  $0$ & $d$ & $0$& $0$ & $d$\\
$1$ &  $0$ &  $a$ & $b$ & $c$ & $d$ & $1$
\end{tabular}
\hspace{1.5cm}
\begin{tabular}{c | c c c c c c} $\rightarrow$ & $0$ &  $a$ &
$b$ &  $c$  & $d$ &  $1$ \\\hline
$0$ &  $1$ &  $1$ & $1$ & $1$ & $1$ & $1$\\
$a$ &  $d$ &  $1$ & $b$ & $b$ & $d$ & $1$\\
$b$ &  $0$ &  $a$ & $1$ & $a$ & $d$ & $1$\\
$c$ &  $d$ &  $1$ & $1$ & $1$ & $d$ & $1$\\
$d$ &  $a$ &  $1$ & $1$ & $1$ & $1$ & $1$\\
$1$ &  $0$ &  $a$ & $b$ & $c$ & $d$ & $1$\end{tabular}
$$
Then $L(\wedge,\vee,\odot,\rightarrow ,0,1)$ is a MTL-algebra.

Let $(F,A)$ be a soft set over $L$, where $A=(0,1]$ and $F:
A\rightarrow \mathcal{P}(L)$ be a set-valued function defined by
$$
F(x)=\left\{\begin{array}{c l} \{0,a,b,c,d,1\} & \mbox{\ \ \ if\ \
\   }  0<x\le 0.4,\\ \{1,a\} & \mbox{\ \ \  if\ \ \ } 0.4<t\le
0.8,\\ \emptyset & \mbox{\ \ \  if\ \ \ } 0.8<t\le 1.
\end{array}\right.
$$

Thus, $F(x)$ is a G-filter of $L$ for all $ x\in A$, and so $(F,A)$
is a G-filteristic soft MTL-algebra over $L$.

\medskip
In a similar way as Theorem 4.1.4 we can prove

\paragraph{Theorem 4.3.3.} {\it Let $\mu$ be a fuzzy set of $L$. Then an $\in$-soft set
$(F,A)$ over $L$ with $A=(0,1]$ is a G-filteristic soft
MTL-algebra over $L$ if and only if $\mu$ is a fuzzy G-filter of
$L$.}

\paragraph{Theorem 4.3.4.}
{\it Let $\mu$ be a fuzzy set of $L$. If $(F_{q},A)$, where
$A=(0,1]$, is a q-soft set over $L$, then $\mu$ is a fuzzy
G-filter if and only if each non-empty $F_{q}(t)$ is a G-filter.}

\begin{proof} The proof is similar to the proof of Theorem
4.1.5.
\end{proof}

\paragraph{Definition 4.3.5.} An $(\in,\in\vee q)$-fuzzy filter $\mu$ of $L$ is called
an {\it $(\in,\in\vee q)$-fuzzy G-filter} if
\[
\mu(x\rightarrow y)\ge \min\{\mu(x\odot x\rightarrow y), 0.5\}
\]
holds for all $x,y\in L$.

\paragraph{Theorem 4.3.6.} {\it Let $\mu$ be a fuzzy set of $L$. Then
an $\in$-soft set $(F,A)$ over $L$ with $A=(0,0.5]$ is a
G-filteristic soft MTL-algebra if and only if $\mu$ is an
$(\in,\ivq)$-fuzzy G-filter of $L$.}
 \begin{proof} The proof is similar to the proof of Theorem 4.1.7.
\end{proof}

\paragraph{Definition 4.3.7.} An $(\overline{\in},\overline{\in} \vee
\overline{q})$-fuzzy filter of $L$ is called an {\it
$(\overline{\in},\overline{\in} \vee \overline{q})$-fuzzy
G-filter} of $L$ if
\[
\max\{\mu(x\rightarrow y),0.5\}\ge\min\{\mu(x\odot x\rightarrow
y)\}
\]
holds for all $x,y\in L$.

\paragraph{Theorem 4.3.8.} {\it For a fuzzy set $\mu$ of $L$ and
an $\in$-soft set $(F,A)$ over $L$ with $A=(0.5,1]$ the following
conditions are equivalent:

$(i)$ \ $\mu$ is an $(\overline{\in},\overline{\in} \vee\overline{
q})$-fuzzy G-filter of $L$,

$(ii)$ \ $(F,A)$ is a G-filteristic soft MTL-algebra over $L.$}

\begin{proof} The proof is analogous  to the proof of Theorem
4.1.9.
\end{proof}

\paragraph{ }Also the proofs of the following two theorems are very similar to the proofs
of Theorems 3.9 and 3.10, respectively.

\paragraph{Theorem 4.3.9.} {\it Let $\mu$ be a fuzzy set of $L$. Then
a $q$-soft set $(F_{q},A)$ over $L$ with $A=(0,0.5]$ is a
G-filteristic soft MTL-algebra if and only if $\mu$ is an
$(\overline{\in},\overline{\in}\vee\overline{q})$-fuzzy G-filter.}

\paragraph{Theorem 4.3.10.}{\it Let $\mu$ be a fuzzy set of $L$. Then a $q$-soft set $(F_{q},A)$ over $L$
with $A=(0.5,1]$ is a G-filteristic soft MTL-algebra if and only
if $\mu$ is an $(\in,\ivq)$-fuzzy G-filter of $L$.}

\paragraph{} As a consequence of Theorems 3.7, 4.3.4 and 4.3.6 we obtain

\paragraph{Theorem 4.3.11.}
{\it For a fuzzy set $\mu$ of $L$ and an $\in$-soft set $(F,A)$
over $L$ with $A=(\alpha,\beta]$, where $0<\alpha<\beta\leq 1$,
the following conditions are equivalent:

$(i)$ \ $\mu$ is a fuzzy G-filter with thresholds
$(\alpha,\beta]$,

$(ii)$ \ $(F,A)$ is a G-filteristic soft MTL-algebra over $L$.}

\paragraph{} Finally, we give the relationship between the
filteristic soft MTL-algebras described above.

\paragraph{Theorem 4.3.12.} {\it Let $\mu$ be a fuzzy set of $L$. If an
$\in$-soft set $(F,A)$ over $L$ with $A=(\alpha,\beta]\subset
(0,1]$ is a Boolean filteristic soft MTL-algebra, then it also is
a G-filteristic soft MTL-algebra, but the converse may not be
true.}

\begin{proof} It is a consequence of Theorems 4.1.6,
4.1.7, 4.1.8, 4.3.5, 4.3.6, 4.3.8 and Theorem 4.5 in
\cite{20}.
\end{proof}

\paragraph{Theorem 4.3.13.}
{\it Let $\mu$ be a fuzzy set of $L$. Then an $\in$-soft set
$(F,A)$ over $L$ with $A=(\alpha,\beta]\subset (0,1]$ is a Boolean
filteristic soft MTL-algebra if and only if it is both an
MV-filteristic soft MTL-algebra and a G-filteristic soft
MTL-algebra.}

\begin{proof} It is a consequence of Theorems 4.2.13,
4.3.12 and Theorem 4.5 in \cite{20}.
\end{proof}

\subsection*{5. Conclusion}

\paragraph{ } In this paper, we apply fuzzy and soft set theory
to MTL-algebras.  We hope that the research along this direction can
be continued, and in fact, some results in this paper have already
constituted a platform for further discussion concerning the future
development of soft MTL-algebras and other algebraic structure.

In our future study of  MTL-algebras, may be the following topics
should be considered:

(1) To describe the soft MTL-algebras based on rough sets;

(2) To discuss the relations between soft MTL-algebras based on
fuzzy sets and rough sets;

(3) To consider the soft implication-based fuzzy  filters in
MTL-algebras.

\subsection*{Acknowledgements }

\paragraph{ } The research   is partially
supported by    the National Natural Science Foundation of China
(60875034);  the Natural Science Foundation of Education Committee
of Hubei Province, China (D20092901; Q20092907;  D20082903;
B200529001) and  the Natural Science Foundation of Hubei Province,
China (2008CDB341).

{\small
}

\begin{thebibliography}{36}


\bibitem{1} H. Aktas, N. Cagman, Soft sets and soft groups, Inform. Sci., 177 (2007), 2726--2735.

\bibitem{2} S. K. Bhakat,  P. Das,
        $(\in, \in \!\vee \, {\rm q})$-fuzzy subgroup,
     Fuzzy Sets Syst., 80 (1996), 359--368.

\bibitem{3} F. Esteva, L.  Godo,  Monoidal $t$-norm
 based logic: towards a logic for left-continuous, $t$-norms,  Fuzzy Sets   Syst.,
 124 (2001), 271--288.

\bibitem{21} P. Flondor, G. Georgescu, A. Iorgulescu,
            Pseudo-t-norms and pseudo-Bl algebras, Soft Computing, 5 (2001), 355--371.

\bibitem{4} P. H$\mathrm{\acute{a}}$jek,  Metamathematics of Fuzzy Logic, Kluwer
Academic Press,  Dordrecht, 1998.

\bibitem{5} Y.B. Jun,
      Soft BCK/BCI-algebras,
    Comput. Math. Appl. 56 (2008), 1408--1413.

\bibitem{6} Y.B. Jun, C.H. Park,
      Applications of soft sets in ideal theory of BCK/BCI-algebras,
    Inform. Sci., 178 (2008), 2466--2475.

\bibitem{7} Y.B. Jun, Y. Xu, X.H.  Zhang,  Fuzzy filters of MTL-algebras, Inform. Sci.,
175 (2005), 120--138.

\bibitem{8}   K.H. Kim,  Q.  Zhang,  Y.B. Jun,  On fuzzy filters of
MTL-algebras,  J. Fuzzy Math.,  10 (2002), 981--989.

\bibitem{9} X. Ma, J. Zhan,  A note on ``Generalized fuzzy filters of
MTL-algebras", J.  Multiple-Valued Logic and Soft Computing,
15 (2009), 1--4.

\bibitem{10} X. Ma, J. Zhan,  D. Xiang,  Some types of generalized fuzzy filters of
MTL-algebras, J. Intelligent and Fuzzy Systems, 20 (2009), in print.

\bibitem{11} X. Ma, J. Zhan,  Y.  Xu,  Generalized fuzzy filters of
MTL-algebras, J.  Multiple-Valued Logic and Soft Computing,
14 (2008), 119-128.

\bibitem{13} P. K. Maji, R. Biswas, A. R. Roy,
       Soft set theory,
    Comput. Math. Appl., 45 (2003), 555--562.

\bibitem{12} P. K. Maji, A. R. Roy and R. Biswas,
      An application of soft sets in a decision making problem,
    Comput. Math. Appl., 44 (2002), 1077--1083.

\bibitem{14} D. Molodtsov,
      Soft set theory - First results,
    Comput. Math. Appl.,  37 (1999), 19--31.


\bibitem{15} P. M. Pu, Y. M. Liu,
    Fuzzy topology I, Neighborhood structure of a fuzzy point
            and Moore-Smith convergence,
    J. Math. Anal. Appl.,  76 (1980), 571--599.

\bibitem{16} E. Turunen,  Boolean  deductive systems of MTL-algebras,
 Arch. Math. Logic,  40 (2001), 467--473.

\bibitem{17} L.A. Zadeh, From circuit theory to system theory, Proc.
Inst. Radio Eng., 50 (1962), 856--865.

\bibitem{18} L.A. Zadeh, Toward a generalized theory of uncertainty (GTU)-an outine,
Inform. Sci.,  172 (2005), 1--40.

\bibitem{19} X.H. Zhang, K.Y. Qin, W.A. Dudek, Ultra $LI$-ideals
           in lattice implication algebras and MTL-algebras, Czechoslovak
          Math. J., 57(132) (2007), 591--605.

\bibitem{20} X.H. Zhang, Y.Q. Wang,  Y.L. Liu,  Fuzzy ultra filters
and fuzzy $G$-filters of  MTL-algebras, in: L.Wang  and Y.Jin(Eds.):
FSKD, LNAI 3613 (2005)  160--166.

\end{thebibliography}
\end{document}